\colorlet{red}{RubineRed!70}
\DeclareMathOperator{\lcm}{lcm}
\newcommand{\N}{\mathbb N}
\newtheorem{theorem}{Theorem}
\newtheorem{rem}{Remark}
\newtheorem{prop}{Proposition}
\newtheorem{example}{Example}
\newtheorem{definition}{Definition}
\newtheorem{corollary}{Corollary}
\title[Concordant voting situations]{Existence and construction of voting situations concordant with ranking 
patterns}
\author{Emilio De Santis}
\address{University of Rome La Sapienza, Department of Mathematics
Piazzale Aldo Moro, 5, 00185, Rome, Italy}
\email{desantis@mat.uniroma1.it}
\author{Fabio Spizzichino}
\address{University of Rome La Sapienza
Piazzale Aldo Moro, 5, I-00185, Rome, Italy}
\email{fabio.spizzichino@fondazione.uniroma1.it}
\begin{document}
\begin{abstract}
 Referring to a standard context of voting theory, and to the classic notion of \textit{voting situation}, here we show that it is  possible to observe any arbitrary set of elections' outcomes, no matter how paradoxical it may appear. On this purpose we
 use results, presented in the recent paper \cite{DS22}, that hinge on the concept of ranking pattern concordant with a probability model for non-negative random variables and on a related role of special load-sharing models.
Our results here will be obtained by suitably extending those therein, and by converting them into the context of voting.

\medskip
\noindent
\emph{Keywords:} Majority graphs, Ranking patterns,  Paradoxes of voting theory, Load-sharing models.

\medskip \noindent
\emph{AMS MSC 2010:}   91B12, 91B14, 60E15. 
\end{abstract}

\maketitle
\section{Introduction}
Referring to the classic notion of \textit{voting situation} which arises
in voting theory, we show that it is possible to observe any
arbitrary family of voting outcomes, no matter how paradoxical it may appear.
On this purpose we use recent results, obtained in the paper \cite{DS22} and concerning with
a multivariate extension of \textit{stochastic precedence.} Such results will
be  extended and converted into the context of voting. Our conclusions are conceptually similar
to those coming from seminal results given by Saari (see e.g. \cite{Saari(dictionary)}, \cite{Saari(SIAM)}).
However our method is based on special multivariate probabilistic models of
survival analysis and emphasis is given here to issues related to constructive
aspects of the desired voting situations.

As a reference point, we consider the standard scenario of voting that will be
briefly surveyed as follows. For more complete descriptions and presentations,
Readers are addressed, e.g., to \cite{Nurmi}, \cite{GL2017},
\cite{BachmeierEtAl}, \cite{MontesEtAl20}, \cite{Stearns} and bibliography cited therein.

We think of a system of possible elections, based on a fixed set of potential
\textit{candidates,} labeled as $1,2,...,m$, and on a fixed set of
\textit{voters, }$v_{1},v_{2},...,v_{n}$. Denote by $\left[  m\right]
\equiv\{1,2,...,m\}$ the set of candidates and by $V$ $\equiv\{v_{1}%
,v_{2},...,v_{n}\}$ the set of voters.

Each voter is supposed to cast one and only one vote, at
any election. On the other hand different elections can be considered, respectively characterized by the
different subsets $A\subseteq\left[  m\right]  $ of participating candidates.
The differences among elections' outcomes, encountered at varying
the subsets of candidates, are just at the core of the questions considered here.

Concerning the mechanism determining the vote of the single voter $v_{l}$
($v_{l}\in V,l=1,...,n$) in any of such elections, it is assumed that the
individual preferences, among candidates, of $v_{l}$ are \textit{complete},
\textit{transitive,} and indifference is not allowed between any two
candidates. Thus, individual preferences give rise to a \textit{linear
preference ranking}. In other words the preference ranking of $v_{l}$ is
described by a permutation \emph{$r_{l}$ }over the set $[m]$. Such a
permutation is to be viewed as a piece of a-priori information, established
independently of the special subsets of candidates which will be encountered
in the elections. Thus no decision strategy is needed and, in an election
characterized by a subset $A\subseteq\left[  m\right]  $ of candidates, the
vote expressed by $v_{l}$ is just addressed to the one who is the preferred
candidate among the elements of $A$, according to the permutation $r_{l}$, for
$l=1,...,n$.

Looking at the collective of all the $n$ voters, denote by $N^{(m)}%
(j_{1},\ldots,j_{m})$ the number of those $v_{l}$'s who share the same linear
preference ranking $\left(  j_{1},\ldots,j_{m}\right)  $, i.e. such that
$r_{l}=(j_{1},\ldots,j_{m})$. We focus attention on the family of numbers
$\mathcal{N}^{(m)}\equiv\{N^{(m)}(j_{1},\ldots,j_{m})\}$, which is typically
referred to as the \textit{voting situation}. For $A\subseteq\left[  m\right]
$ and $j\in A$, we denote by $n_{j}(A)$ the number of votes obtained by candidate
$j$ in an election where $A$ is the set of competing candidates. The numbers
$n_{j}(A)$, for $A\subseteq\left[  m\right]  $ and $j\in A$, are clearly
determined by the voting situation $\mathcal{N}^{(m)}$.  In the analysis of the family of numbers $n_{j}(A)$, for $A\subseteq\left[
m\right]  $, $j\in A$, several types of paradoxes can be encountered.

It is well-known that, as a starting point of voting theory, some voting
situations give rise to \textit{Condorcet paradoxes, }namely to
non-transitivity of collective voting outcomes.

%: in
%particular a generalized form of non-transitivity (arising when comparison of
%values $n_{j}(A)$ are made for sets $A$ containing more than two elements) or
%cases where, for a given $A\subseteq\left[  m\right]  $ and a triple of
%indexes $i,j,k$ $\in\left[  m\right]  $, with $i,j$ $\in A$, $k\notin A$, it
%simultaneously happens
%\[
%n_{i}(A)>n_{j}(A),\quad n_{i}(A\cup\{k\})<n_{j}(A\cup\{k\}).
%\]
The literature  related  to voting paradoxes has a long history (see e.g. \cite{AlonPara}, \cite{Fishburn(1981)}, \cite{Mala99}, \cite{Saari(dictionary)}, \cite{Saari(SIAM)}, \cite{Saari(handbook)}). Along such
literature a very wide catalogue of, more or less complex, special cases has
been thoroughly analyzed to show that the possibility of some types of
paradoxes cannot be excluded. Other parts of the literature have been devoted
to analyzing the probability that paradoxes can manifest, in the case of random voting situations and under different
stochastic models (see e.g. \cite{HMRZ20}).

Essentially, we are not going to deal with random voting situations and with the probability of observing
paradoxical outcomes. Rather we hinge on the probabilistic method developed in
\cite{DS22} to show that, for any paradox emerging in  the analysis of the numbers $n_j(A)$, it is possible to construct voting situations which give rise to it.

A clearer idea about our purposes can be summarized as follows.

In order to describe formally outcomes of the elections for different subsets
of candidates, to any voting situation we associate a \textit{q-concordant}
\textit{ranking pattern,} according to the definitions that have been
introduced in \cite{DS22} and that will be recalled below (Definitions \ref{RankFunctns}, \ref{RankPattExdef1},
and \ref{def2}).

The notion of ranking pattern extends the one of \textit{majority graph}  and can be seen as an ordinal variant of a
choice function. Within the family of all the ranking pattern we distinguish the subclass of \textit{strict} ranking patterns.

Several different voting situations can be q-concordant with a same (strict or not)
ranking pattern.
In \cite{DS22}, the definition was given of ranking pattern p-concordant with a multivariate survival model and it has been proven that, for any strict ranking pattern, there exist p-concordant survival models (see 
Theorem 2 in \cite{DS22}, recalled here as Theorem~\ref{EpsilonEspliciti} in Appendix).
Starting from this result, here we prove the existence of q-concordant voting situations for any (strict or weak) ranking pattern (Theorem \ref{forti} and Theorem \ref{forti-deboli}).
We furthermore analyze different aspects concerning with related constructions and with possible numbers of voters.
The method of proof for Theorem 2 in \cite{DS22} has been based on a special class of load-sharing models and it is completely constructive.
The latter circumstance will in particular allow us to detail other aspects about the difference between strict and weak
ranking patterns, for what concerns possible numbers of voters associated to q-concordant voting situations.

More precisely, the plan of the paper is as follows.

In Section \ref{sec2}, we recall notation, definitions, and some properties of
quantities related with the multivariate extension of stochastic precedence, as
studied in \cite{DS22}. In particular, we recall the definition of ranking pattern p-concordant with a probability model for a $m$-tuple of non-negative random variables. Based on this material we will in particular
detail the identity between the voting-theory scenario and multivariate
stochastic precedence.

Section  \ref{section 3} is devoted to presenting results concerning the construction of
voting situations. By exploiting and extending results presented in
\cite{DS22}, we will in particular single out a method to construct
voting situations concordant with arbitrary  ranking patterns, by starting from
voting situations concordant with strict ranking patterns (see Theorem \ref{forti-deboli} and the related proof). %On this purpose we also introduce and analyze different operations on the space of voting %situations with a fixed number of candidates.

 Distinguishing between the two cases of strict and weak ranking patterns, the issue of possible values of the
number of voters required for obtaining  q-concordant voting situations is studied in Section  \ref{sec4}.

In Section  \ref{sec5} we present a discussion based on some concluding remarks and some examples. In particular we point out how Theorem \ref{forti-deboli}  leads us  to a general existence result also concerning with concordant survival models. Such a generalization, that fills a gap present in \cite{DS22},  would not be smooth without the passage through the language of voting theory.

For the sake of self-consistency, in the Appendix we review definitions and main properties
of load-sharing models, which are used in the construction of survival
models p-concordant with strict ranking patterns.
We finally present a proof for a related existence result (Theorem \ref{th1semplificato}). Such a proof is independent from the one of the
corresponding constructive result presented in \cite{DS22} and it rather aims to show the
 reasons why solutions can be found within the family of
load-sharing probabilistic models.

\section{Notation, Definitions, and Preliminary Results}
\label{sec2}
 Let $[m]=\{1,...,m\}$ and denote by $\Pi_{m}$ the set of
permutations of the elements of $[m]$.

\noindent Let $X_{1},\dots,X_{m}$ be non-negative random variables satisfying
the no-tie condition
\begin{equation}
\mathbb{P}(X_{i}\neq X_{j})=1, \label{no-tie}%
\end{equation}
for $i,j\in[ m]$ with $i\neq j$. Let the symbols $X_{1:m},\ldots,X_{m:m}$
denote the order statistics of $X_{1},\dots,X_{m}$ and let $\mathbf{J}%
\equiv\left(  J_{1},\ldots,J_{m}\right)  $ denote the random vector defined by
the position
\begin{equation}
J_{r}=i\Leftrightarrow X_{i}=X_{r:m} \label{random}%
\end{equation}
for any $i,r\in[ m]$. \ For $(j_{1}, \ldots, j_{m} ) \in\Pi_{m}$, we set
\begin{equation}
p_{m}(j_{1},\ldots,j_{m})=\mathbb{P}(J_{1}=j_{1},\,J_{2}=j_{2},\,\ldots
,\,J_{m}=j_{m}), \quad\forall(j_{1}, \ldots, j_{m} ) \in\Pi_{m}
\label{pTotali}%
\end{equation}
and by $\mathbf{P}_{\mathbf{J}}$ we briefly denote the collection of all the
$m!$ probabilities in \eqref{pTotali}. Clearly, $\mathbf{P}_{\mathbf{J}}$ is
determined by the joint probability distribution of $X_{1},\dots,X_{m}$ \ and,
in its turn, it determines a probability distribution over the finite set
$\Pi_{m }$. Consider also the family
\[
\mathcal{A}_{( m )} :=   \{\alpha_{j}(A);A\subseteq\lbrack m],j\in A\},
\]
where $\alpha_{j}(A)$ denotes the probability defined by
\begin{equation}
\alpha_{j}(A):=\mathbb{P}(X_{j}=\min_{i\in A}X_{i}). \label{alphasNv}%
\end{equation}
In some contexts it is useful to look at $\alpha_{j}(A)$ as a \textit{winning
probability}.

\begin{rem}
\label{RemADetermined}The family $\mathcal{A}_{(m)}$ is determined by
$\mathbf{P}_{\mathbf{J}}$. In this respect a detailed formula is shown in
Proposition 1 of \cite{DS22}.
\end{rem}

\bigskip

We are generally interested in comparisons of the type
\begin{equation}
\alpha_{i}(A)\geq\alpha_{j}(A)\text{, for }A\subseteq\left[  m\right]  ,i,j\in
A. \label{MainComparisonNv}%
\end{equation}

When $A\equiv\{i,j\}$, the inequality appearing in
(\ref{MainComparisonNv}) is just equivalent to \textit{stochastic precedence}
of $X_{i}$ with respect to $X_{j}$, a notion of interest in different fields
of applied probability. When the cardinality $|A|$ of $A$ is greater than two,
we can look at the comparison in (\ref{MainComparisonNv}) as a form of
multivariate stochastic precedence, see also \cite{DMS20}.

%The family $\mathcal{A}_{(m)}$ gives rise to a ranking pattern
%$\boldsymbol{\sigma}$ \ according to the following definitions.

In order to describe preference rankings among the
elements of the subsets $A\subseteq\left[  m\right] $, we introduce ranking functions $\sigma(A,\cdot):A\rightarrow
\{1,2,\ldots,|A|\}$.  For $i,j\in A$, it
will be $\sigma(A,i)<\sigma(A,j)$ if $i$ \textit{precedes }$j$\textit{ in
}\emph{$A$} whereas $\sigma(A,i)=\sigma(A,j)$ when they are\textit{ equivalent
in}\emph{\textit{ }$A$}.
More precisely $\sigma(A,i)$ is equal to $1$ plus the
number of elements belonging to $A$ and preceding $i$. Consider for example
the case $m=10$, $A=\{3,4,6,8,9\}$, where $6$ is the favorite element,
followed by $4$ and $8$ with equal merit, then followed by $3$ and finally
with $9$ as the less favorite element in $A$. Then
\[
\sigma(A,6)=1; \quad \sigma(A,4)=\sigma(A,8)=2;
\]
\[
\sigma(A,3)=4; \quad \sigma(A,9)=5.
\]
Formally, we  give the following
\begin{definition}
\label{RankFunctns}
For $A\subseteq\left[  m \right]  $, a mapping $\sigma(A,\cdot):A\rightarrow
\{1,2,\ldots,|A|\}$ is a \textit{ranking function} if it satisfy the following condition
\begin{equation}\label{constrain}
\sigma(A,i) =1+ \sum_{j \in A} \mathbf{1}_{ [ \sigma(A,i) -1] }(\sigma (A, j))
\end{equation}
for each $i \in A$.
\end{definition}
When some equivalence holds between two elements of $A$, we say that
$\sigma(A,\cdot)$ is a \textit{weak ranking function}.
When, on the contrary, the values $\sigma(A,j)$,
$j\in A$, are all different then  $\sigma(A,\cdot):A\rightarrow
\{1,2,\ldots,|A|\}$ is a bijective function, namely $\sigma(A,\cdot)$
describes a permutation of the elements of $A$. This case will be designated by the term  \textit{strict ranking function}. 

\begin{definition}
\label{RankPattExdef1}  For $m\geq2$, a \emph{ranking
pattern}\textit{ }over $[m]$ is a family of ranking functions
$\boldsymbol{\sigma}\equiv\{\sigma(A,\cdot):A\subseteq\left[  m\right] , \, |A|\geq 2 \}$. A
ranking pattern is said to be \emph{strict} when it does not contain any weak
ranking function. The symbols $\Sigma^{(m)}$ and $\hat{\Sigma}^{(m)}%
\subset\Sigma^{(m)}$ denote the collections of all the ranking patterns over
$[m]$ and of all the strict ranking patterns, respectively.
\end{definition}

In a natural way, one associates to the family $\mathcal{A}_{\left(  m\right)
}$ a ranking pattern $\boldsymbol{\sigma}$ defined as follows.

\begin{definition}
\label{def2} The ranking pattern $\boldsymbol{\sigma}\equiv\{\sigma
(A,\cdot): A\subseteq\left[  m\right] , \, |A|\geq 2 \}$ and the\emph{ $m$-}tuple\emph{
$(X_{1},\dots,X_{m})$} are $p$\emph{-concordant} whenever, for any
$A\subseteq\left[  m\right]  $ and $i,j\in A$ with $i\neq j$
\begin{equation}
\sigma(A,i)<\sigma(A,j)\Leftrightarrow\alpha_{i}(A)>\alpha_{j}(A),
\label{glialfa}%
\end{equation}%
\begin{equation}
\sigma(A,i)=\sigma(A,j)\Leftrightarrow\alpha_{i}(A)=\alpha_{j}(A).
\label{glialfauguali}%
\end{equation}

\end{definition}

Notice that the quantities $\sigma(A,i)$ are natural numbers belonging to
$[|A|]$, whereas the quantities $\alpha_{j}(A)$ are real numbers belonging to
$[0,1]$. Obviously for any $m$-tuple $(X_{1},\ldots,X_{m})$ there exists one
and only one concordant ranking pattern.

We also point out that,   for convenience, the concept of ranking function has been defined here
%in slightly modified (but equivalent) form, w.r.t. the definition in \cite{DS22}. However the two
%definitions coincide for the case of strict ranking functions. Consequently,
%also the concept of strict ranking pattern, as appearing in that paper, does
%coincide with the one considered here.

\bigskip

Let us now come back to the voting scenario described in the Introduction and
fix a voting situation $\mathcal{N}^{(m)}$. For $i\in\lbrack m]$ and
$\mathbf{j}\equiv(j_{1},\ldots,j_{m})\in\Pi_{m}$, \ the symbol $\phi
_{(j_{1},\ldots,j_{m})}(i)$ will be used to denote the position of $i$ within the
vector $(j_{1},\ldots,j_{m})$. Thus $\phi_{(j_{1},\ldots,j_{m})}(\cdot)$ is a
bijection on $[m]$. It is easy to check that, for any $A\subseteq\lbrack m]$
with $|A|\geq2$, the number $n_{i}(A)$ of votes obtained by $i$, in an
election where $A$ is the set of participating candidates and under
$\mathcal{N}^{(m)}$, can be expressed as follows
\begin{equation}
n_{i}(A)=\sum_{(j_{1},\ldots,j_{m})\in\Pi_{m}}N^{(m)}(j_{1},\ldots,j_{m}%
)\prod_{j\in A:j\neq i}\mathbf{1}_{\{\phi_{(j_{1},\ldots,j_{m})}%
(i)<\phi_{(j_{1},\ldots,j_{m})}(j)\}}.\label{voti1}%
\end{equation}
In fact all the voters who share the same linear preference ranking
$\mathbf{j}\equiv(j_{1},\ldots,j_{m})$ will cast their individual vote in favor of
$i$ if $i\in A$ is the favorite candidate within the set $A$, i.e. whenever
$\phi_{(j_{1},\ldots,j_{m})}(i)<\phi_{(j_{1},\ldots,j_{m})}(j)$, for any $j\in
A,$ with $j\neq i$.

We focus attention on the family
\begin{equation}
\mathbf{N}:=\{n_{j}(A);A\subseteq\left[  m\right]  ,j\in A\},\label{FamilyN}%
\end{equation}
associated to $\mathcal{N}^{(m)}$.  We also look at the total number of
voters:
\begin{equation}
n =n (  \mathcal{N}^{(m)} )  :=\sum_{(j_{1},\ldots,j_{m})\in\Pi_{m}%
}N^{(m)}(j_{1},\ldots,j_{m})\label{TotalNumberofVoters}
\end{equation}

Furthermore we point out that also the family $\mathbf{N}$ gives rise to a ranking pattern. In analogy with the above Definition \ref{def2},
it is in fact natural to introduce the following

\begin{definition}
\label{AnalogDefinition} A ranking pattern $\boldsymbol{\sigma}\equiv
\{\sigma(A,\cdot);A\subseteq\left[  m\right] , |A|\geq 2 \}$ and a voting situation
$\mathcal{N}^{(m)}$are $q$\emph{-concordant} \ whenever, for any
$A\subseteq\left[  m\right]  $ and $i,j\in A$ with $i\neq j$
\begin{equation}
\sigma(A,i)<\sigma(A,j)\Leftrightarrow n_{i}(A)>n_{j}(A), \label{glienne}%
\end{equation}%
\begin{equation}
\sigma(A,i)=\sigma(A,j)\Leftrightarrow n_{i}(A)=n_{j}(A). \label{glienneuguali}%
\end{equation}

\end{definition}

Keeping in mind that any voting situation $\mathcal{N}^{(m)}$ triggers a
corresponding family $\mathbf{N}$ and in order to detail the relation tying
$\mathbf{N}$ to $\mathcal{N}^{\left(  m\right) }$, we now single out a
discrete probability model, strictly related to $\mathcal{N}^{(m)}$ and
defined as follows. Consider the finite probability space $(\Pi_{m}%
,\mathbb{P}^{\mathcal{N}})$ with
\[
\mathbb{P}^{\mathcal{N}}(j_{1},\ldots,j_{m})=\frac{N^{(m)}(j_{1}%
,\ldots,j_{m})}{n},
\]
for any $(j_{1},\ldots,j_{m})\in\Pi_{m}$ and $n$ given in \eqref{TotalNumberofVoters}. On the space $(\Pi_{m}%
,\mathbb{P}^{\mathcal{N}})$ we define the no-tie, discrete, random variables
\begin{equation}
\widehat{X}_{i}(j_{1},\ldots,j_{m})=\phi_{(j_{1},\ldots,j_{m})}%
(i)\label{r.v.discreta}%
\end{equation}
for $i\in\lbrack m]$. Under such a position, the related discrete random
variables $(\widehat{J}_{1},\ldots,\widehat{J}_{m})$ are such that, for any
$(j_{1},\ldots,j_{m})\in\Pi_{m}$,
\[
p_{m}^{\mathcal{N}}(j_{1},\ldots,j_{m})=\mathbb{P}^{\mathcal{N}}%
((\widehat{J}_{1},\dots,\widehat{J}_{m})=(j_{1},\ldots,j_{m}))=
\]%
\begin{equation}
=\mathbb{P}^{\mathcal{N}}(j_{1},\ldots,j_{m})=\frac{N^{(m)}(j_{1}
,\ldots,j_{m})}{n}.\label{q=p}
\end{equation}
For what concerns the family $\widehat{\mathcal{A}}_{(m)}$, associated with
the random variables defined in \eqref{r.v.discreta}, one has
%We are now ready to  determine the $\mathcal{A}_{(m)}$
%associated with the random variables defined in \eqref{r.v.discreta}.
%One has%
\[
\widehat{\alpha}_{i}(A)=\mathbb{P}^{\mathcal{N}}(\widehat{X}_{i}=\min_{j\in
A}\widehat{X}_{j})=
\]%
\begin{equation}
\sum_{(j_{1},\ldots,j_{m})\in\Pi_{m}}\frac{N^{(m)}(j_{1},\ldots,j_{m})}%
{n}\prod_{j\in A:j\neq i}\mathbf{1}_{\{\phi_{(j_{1},\ldots,j_{m})}%
(i)<\phi_{(j_{1},\ldots,j_{m})}(j)\}}=\frac{n_{i}(A)}{n}.\label{alfa1}%
\end{equation}

%\newpage
%in a way similar to the one tying the family $\mathcal{A}_{\left(
%m\right)  }$ to $\mathbf{P}_{\mathbf{J}}$. In order to detail such a relation,
%consider the family of numbers defined as follows. For $h=1,...,m$ and
%$j_{1},...,j_{h}\in\left[  m\right]  $ with $j_{1}\neq...\neq j_{h}$, denote
%by $n_{h}\left(  j_{1},...,j_{h}\right)  $ the number of voters who rank the
%elements $j_{1},...,j_{h}$ at the first $h$ positions, namely the number of
%indexes $l$ such that
%\[
%r_{l}=\left(  j_{1},...,j_{h},i_{1},...,i\,_{m-h}\right)
%\]
%for some $i_{1},...,i\,_{m-h}$ $\in\left[  m\right]  $ with $\left(
%j_{1},...,j_{h},i_{1},...,i\,_{m-h}\right)  \in$ $\Pi_{\left(  m\right)  }$.
%Then the following claim can easily be proven.
%\begin{prop}
%\label{AnalogPropositionForNumbers}For $A\subseteq\left[  m\right]  $, with
%$|A|=u$ and $j\in A$, one has
%\[
%n_{j}(A)=n_{j}\left(  \left[  m\right]  \right)  +\sum_{k=1}^{m-u}%
%{\sum_{(i_{1},\ldots,i_{k})}}n_{k+1}(i_{1},\ldots,i_{k},j),
%\]
%where $i_{1},\ldots,i_{k}\in\left[  m\right]  \backslash A$ and $i_{1}%
%\neq\ldots\neq i_{k}$.
%\end{prop}
%A voting situation $\mathcal{N}^{(m)}$ can give rise to a probability
%distribution over $\Pi_{m}$, by setting
%\begin{equation}
%q_{m}(j_{1},\ldots,j_{m})=\frac{N^{(m)}(j_{1},\ldots,j_{m})}{n}%
%,\label{DistrProbAssocAVotSit}%
%\end{equation}
%where $n$ is the total number $n$ of voters, appearing in the above formula
%(\ref{TotalNumberofVoters}).

\medskip
The above identity allows us to compare the two settings of voting theory and
survival models, respectively concerning a voting situation $\mathcal{N}%
^{(m)}$ with $m$ candidates and the $m$ discrete random variables
$\widehat{X}_{1},...,\widehat{X}_{m}$. We can thus summarize as follows
%By
%recalling the definitions of the quantities $p_{m}(j_{1},\ldots,j_{m}
%),\alpha_{j}(A),n_{j}(A)$ and combining the

\begin{prop} \label{BasicEq} Let $\mathcal{N}^{(m)}$ be a voting
situation and $\widehat{X}_{1},...,\widehat{X}_{m}$ be the random variables
defined in \eqref{r.v.discreta}. Then
\begin{equation}
\widehat{\alpha}_{i}(A)=\frac{n_{i}(A)}{n}.\label{fratelli}%
\end{equation}
for $A\subseteq\left[  m\right]  $ and $i \in A$.
\end{prop}

\bigskip

In view of the above Remark \ref{RemADetermined}, we notice that two models
sharing the same $\mathbf{P_{J}}$ also give rise to the same $\mathcal{A}%
_{(m)}$. By combining Definitions \ref{def2} and \ref{alphasNv} with
Proposition \ref{BasicEq}, one obtains an interesting conclusion concerning a
voting situation $\mathcal{N}^{(m)}$ and any random $m$-tuple $\left(
X_{1},\dots,X_{m}\right)  $ satisfying the relation
\begin{equation}
p_{m}(j_{1},\ldots,j_{m})=\frac{N^{(m)}(j_{1},\ldots
,j_{m})}{n},\quad    \forall (j_{1}
,\ldots,j_{m})\in\Pi_{m} \label{q=pBis}%
\end{equation}
More precisely, we can state the following result

\begin{prop}\label{EquivConcorda} Let $\mathcal{N}^{(m)}$ be a voting
situation and let $X_{1},\dots,X_{m}$ be no-tie, non-negative, random
variables such that the condition (\ref{q=pBis}) holds. Then the following two claims are equivalent:

a) the $m$-tuple $(X_{1},\dots,X_{m})$ is $p$-concordant with the ranking
pattern $\boldsymbol{\sigma}$;

b) the voting situation $\mathcal{N}^{(m)}$ is $q$-concordant with the ranking
pattern $\boldsymbol{\sigma}$.
\end{prop}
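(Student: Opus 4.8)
The plan is to reduce the equivalence of claims (a) and (b) to a direct comparison of the winning probabilities $\alpha_i(A)$ of the $m$-tuple $(X_1,\dots,X_m)$ with the scaled vote counts $n_i(A)/n$. The key observation is that both $p$-concordance and $q$-concordance are defined purely through inequalities and equalities among such quantities, so it suffices to show that the relevant quantities agree.

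First I would note that, by hypothesis \eqref{q=pBis}, the $m$-tuple $(X_1,\dots,X_m)$ shares the same collection $\mathbf{P}_{\mathbf{J}}$ as the discrete random variables $\widehat{X}_1,\dots,\widehat{X}_m$ built from $\mathcal{N}^{(m)}$ in \eqref{r.v.discreta}, since \eqref{q=p} gives $p_m^{\mathcal{N}}(j_1,\dots,j_m)=N^{(m)}(j_1,\dots,j_m)/n$ and \eqref{q=pBis} gives exactly the same value for $p_m(j_1,\dots,j_m)$. Invoking Remark \ref{RemADetermined}, which asserts that the family $\mathcal{A}_{(m)}$ is determined by $\mathbf{P}_{\mathbf{J}}$, I conclude that the winning probabilities coincide:
\begin{equation}
\alpha_i(A)=\widehat{\alpha}_i(A),\qquad A\subseteq[m],\ i\in A.
\end{equation}

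Next I would apply Proposition \ref{BasicEq}, which gives $\widehat{\alpha}_i(A)=n_i(A)/n$, to obtain the crucial identity $\alpha_i(A)=n_i(A)/n$ for every $A\subseteq[m]$ and $i\in A$. From here the equivalence is immediate. Since $n>0$ is a fixed positive normalizing constant, for any $i,j\in A$ we have $\alpha_i(A)>\alpha_j(A)\Leftrightarrow n_i(A)>n_j(A)$ and $\alpha_i(A)=\alpha_j(A)\Leftrightarrow n_i(A)=n_j(A)$. Comparing the defining conditions \eqref{glialfa}--\eqref{glialfauguali} of $p$-concordance with the defining conditions \eqref{glienne}--\eqref{glienneuguali} of $q$-concordance, I see that (a) holds for the ranking pattern $\boldsymbol{\sigma}$ precisely when (b) does.

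I do not expect any serious obstacle here; the statement is essentially a bookkeeping corollary of the identification already carried out in the excerpt. The only point requiring a little care is the justification that $(X_1,\dots,X_m)$ and $(\widehat{X}_1,\dots,\widehat{X}_m)$ yield the same $\alpha_i(A)$ even though they are different random vectors on different probability spaces. This is exactly what Remark \ref{RemADetermined} secures, so the argument is legitimate provided one is explicit that $\alpha_i(A)$ depends on the joint law only through $\mathbf{P}_{\mathbf{J}}$, and that \eqref{q=pBis} together with \eqref{q=p} forces the two collections $\mathbf{P}_{\mathbf{J}}$ to be identical.
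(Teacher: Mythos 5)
Your argument is correct and coincides with the paper's own reasoning: the paper likewise derives the identity $\alpha_i(A)=n_i(A)/n$ by noting (via Remark \ref{RemADetermined}) that models sharing the same $\mathbf{P}_{\mathbf{J}}$ yield the same family $\mathcal{A}_{(m)}$, and then combining this with Proposition \ref{BasicEq} before comparing the two concordance definitions. No gaps; your explicit remark that $\alpha_i(A)$ depends on the joint law only through $\mathbf{P}_{\mathbf{J}}$ is exactly the point the paper relies on.
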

\bigskip

Even though an obvious remark, it is important to note that the condition
(\ref{q=pBis}) requires that all the probabilities $p_{m}(j_{1},\ldots,j_{m})$
are rational numbers, for $(j_{1},\ldots,j_{m})\in\Pi_{m}$.

\section{Existence of $q$-concordant voting situations}
\label{section 3}
It has been proven in \cite{DS22} that for any strict ranking pattern 
$\boldsymbol{\sigma}$ $\in\hat{\Sigma}^{(m)}$ there exists a probability model
 $X_{1},...,X_{m}$ which is $p$-concordant with $\boldsymbol{\sigma}$.
More precisely such a result, presented  therein as Theorem 2, shows the
existence of a $p$-concordant model within a special subclass of load-sharing
models ($LS(\boldsymbol{\varepsilon},\boldsymbol{\sigma
})$ models). The related statement is given in a quantitative form and the
proof therein is constructive. In this way, it provides us with an appropriate
basis for the arguments to be developed in the next section. Details will be
given in the Appendix, where its statement will be recalled (Theorem \ref{EpsilonEspliciti}) after briefly
reviewing necessary definitions and properties of load-sharing models.

For the purposes of the present section it is sufficient to hinge on the
existence result that will be also  stated and proven in the Appendix as
Theorem \ref{th1semplificato}. Its proof is independent from the one of Theorem 2  in \cite{DS22}.  Even though non-constructive, such a proof usefully
points out the  reasons why solutions can be found within the family
of probabilistic models of type $LS(\boldsymbol{\varepsilon},\boldsymbol{\sigma
})$ load-sharing. The vector $\boldsymbol{\varepsilon}$ can be chosen in such
a way that the quantities $p_{m}(j_{1},\ldots,j_{m})$ are rational  so that the
above Proposition \ref{EquivConcorda} can be applied.

Thus, while Theorem 2  in \cite{DS22} is a constructive result, we are also interested in Theorem \ref{th1semplificato} below which points out the
possibility of a larger choice in determining convenient constants
$\varepsilon(2),...,\varepsilon(m)$ giving rise to $p$-concordant load-sharing
models. Such a possibility can turn out to be useful in the search of
$q$-concordant voting situations with reduced numbers of voters.

First of all, by combining Theorem \ref{th1semplificato} and Proposition \ref{EquivConcorda},
it is possible to conclude that there exist voting situations
$q$-concordant with a given ranking pattern $\boldsymbol{\sigma}$ $\in\hat{\Sigma}^{(m)}$. In fact we state

\begin{theorem}
\label{forti}  For any $m \in\N$ and for any $\boldsymbol{\sigma}\in
\hat{\Sigma}^{(m)}$, there exists a voting situation $\mathcal{N}^{(m)}$ which
is $q$-concordant with $\boldsymbol{\sigma}$.
\end{theorem}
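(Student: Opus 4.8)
The plan is to obtain Theorem~\ref{forti} by chaining together two results already available to us: the existence of a $p$-concordant load-sharing model for any strict ranking pattern (Theorem~\ref{th1semplificato}, to be proven in the Appendix), and the equivalence between $p$-concordance of a model and $q$-concordance of a voting situation (Proposition~\ref{EquivConcorda}). The only genuine work is to bridge the gap between these two: Proposition~\ref{EquivConcorda} requires the auxiliary hypothesis \eqref{q=pBis}, namely that the probabilities $p_m(j_1,\ldots,j_m)$ of the model coincide with relative frequencies $N^{(m)}(j_1,\ldots,j_m)/n$ arising from an actual voting situation. Since the $N^{(m)}$ are non-negative integers, this forces every $p_m(j_1,\ldots,j_m)$ to be rational, as already flagged in the remark following Proposition~\ref{EquivConcorda}.

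First I would invoke Theorem~\ref{th1semplificato} to produce, for the given $\boldsymbol{\sigma}\in\hat{\Sigma}^{(m)}$, a load-sharing model of type $LS(\boldsymbol{\varepsilon},\boldsymbol{\sigma})$ that is $p$-concordant with $\boldsymbol{\sigma}$. The essential point, which I expect to be the main obstacle and which is precisely what Theorem~\ref{th1semplificato} is designed to guarantee, is that the parameters $\varepsilon(2),\ldots,\varepsilon(m)$ can be chosen so that all the resulting probabilities $p_m(j_1,\ldots,j_m)$ are rational numbers. The $p$-concordance relation \eqref{glialfa}--\eqref{glialfauguali} is expressed through strict inequalities and equalities among the winning probabilities $\alpha_i(A)$; since these are open (for the strict inequalities) or determined by algebraic identities in the parameters, there is room to perturb $\boldsymbol{\varepsilon}$ within a nonempty open set while preserving concordance, and inside any such open set one can select parameters yielding rational $p_m$-values.

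Next I would convert these rational probabilities into integer vote counts. Writing each $p_m(j_1,\ldots,j_m)$ as a fraction with a common denominator $n$, I set $N^{(m)}(j_1,\ldots,j_m) := n\cdot p_m(j_1,\ldots,j_m)$, which is a non-negative integer for every $(j_1,\ldots,j_m)\in\Pi_m$, with $\sum_{(j_1,\ldots,j_m)\in\Pi_m} N^{(m)}(j_1,\ldots,j_m) = n$. This family $\mathcal{N}^{(m)}$ is a bona fide voting situation with $n$ voters, and by construction it satisfies relation \eqref{q=pBis} with respect to the model $(X_1,\ldots,X_m)$ produced above.

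Finally, since \eqref{q=pBis} holds, Proposition~\ref{EquivConcorda} applies: the $p$-concordance of $(X_1,\ldots,X_m)$ with $\boldsymbol{\sigma}$ (claim~a) is equivalent to the $q$-concordance of $\mathcal{N}^{(m)}$ with $\boldsymbol{\sigma}$ (claim~b). As we have established claim~a via Theorem~\ref{th1semplificato}, claim~b follows, exhibiting the desired voting situation and completing the proof. The delicate step throughout is the rationality of the $p_m$-values; all remaining arguments are bookkeeping.
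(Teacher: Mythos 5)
Your proposal is correct and follows essentially the same route as the paper: invoke Theorem~\ref{th1semplificato} together with the observation (the paper's Remark~\ref{razionali}) that the $\varepsilon(\ell)$ can be taken rational inside the admissible open set, so that all $p_m(j_1,\ldots,j_m)$ are rational; then clear denominators to define $N^{(m)}(j_1,\ldots,j_m)=n\,p_m(j_1,\ldots,j_m)$ and conclude via Proposition~\ref{EquivConcorda}. The only cosmetic difference is that the paper makes the rationality step explicit through the product formula \eqref{dispoNv} applied to rational parameters, whereas you assert it from openness; both amount to the same argument.
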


\begin{proof}
 By Theorem \ref{th1semplificato} and Remark \ref{razionali} one can select a
$L(\boldsymbol{\varepsilon},\boldsymbol{\sigma})$ model that is $p$-concordant
with $\boldsymbol{\sigma}$ and such that all the elements of the set
\[
\mathbf{P_{J}}=\{p_{m}(j_{1},\ldots,j_{m}):(j_{1},\ldots,j_{m})\in\Pi_{m}\}
\]
are rational. Let us now consider the numbers
\begin{equation}
N^{(m)}(j_{1},\ldots,j_{m})=np_{m}(j_{1},\ldots,j_{m}),\quad\forall
(j_{1},\ldots,j_{m})\in\Pi_{m},\label{Nintero}%
\end{equation}
where $n\in\N$ is a suitable constant such that all the $N^{(m)}(j_{1}%
,\ldots,j_{m})$'s are integers. The voting situation $\mathcal{N}^{(m)}$
defined by
\[
\mathcal{N}^{(m)}=\{N^{(m)}(j_{1},\ldots,j_{m}):(j_{1},\ldots,j_{m})\in\Pi
_{m}\}.
\]
is $q$-concordant with $\boldsymbol{\sigma}$ by Proposition
\ref{EquivConcorda}.
\end{proof}

Our next result will show how the existence of $q$-concordant voting
situations for any ranking pattern $\boldsymbol{\sigma}$ $\in\Sigma^{(m)}$ can
be obtained by taking as an assumption the existence of $q$-concordant voting
situations for the only strict ranking pattern in $\hat{\Sigma}^{(m)}$. As a consequence we can obtain a direct generalization of Theorem
\ref{forti}.

On this purpose, it is useful to introduce on the space of voting situations
the following three operations and to prove the ensuing related result.

\begin{itemize}
\item[1.] (Multiplication) For any $\ell\in\mathbb{N}$
\[
\ell\times\mathcal{N}^{(m)} = \{ \ell N^{(m)} (j_{1}, \ldots, j_{m}) : (j_{1},
\ldots, j_{m}) \in\Pi_{m}\}.
\]

\item[2.] (Addition) For any $\ell\in\mathbb{N}$
\[
\ell\text{\textcircled{+}}\mathcal{N}^{(m)} = \{ \ell+N^{(m)} (j_{1}, \ldots,
j_{m}) : (j_{1}, \ldots, j_{m}) \in\Pi_{m}\}.
\]

\item[3.] (Internal addition) For two voting situations $\mathcal{N}^{(m)} $ and
$\mathcal{N^{\prime}}^{(m)} $
\[
\mathcal{N}^{(m)} +\mathcal{N^{\prime}}^{(m)} =\{ N^{(m)}(j_{1}, \ldots, j_{m}
) + {N^{\prime}}^{(m)}(j_{1}, \ldots, j_{m} ) :(j_{1}, \ldots, j_{m} ) \in
\Pi_{m} \}.
\]

\end{itemize}

\begin{prop}
\label{trasforma} For any  $\ell \in \mathbb{N} $ the voting situations
$\ell \times \mathcal{N}^{(m)} $, $\ell$\emph{\text{\textcircled{+}}}$ \mathcal{N}^{(m)} $ and $ \mathcal{N}^{(m)} $,
are $q$-concordant with the same ranking pattern
$\boldsymbol{\sigma} \in \Sigma^{(m)}$.
\end{prop}
\begin{proof}
First we notice that, when $A$ is the set of candidates, the voting situation $\ell\times\mathcal{N}^{(m)} $
assigns $\ell \cdot n_{i} (A)$ votes to candidate $i$. Thus
 the  proportion  of votes assigned to each candidate remains the same as in the original voting
situation $\mathcal{N}^{(m)} $.  Hence $\ell\times\mathcal{N}^{(m)} $ and $\mathcal{N}^{(m)} $
are q-concordant with the same ranking pattern $\boldsymbol{\sigma}$.

We now prove that $\mathcal{N}^{(m)} $ and $\ell$\textcircled{+}$\mathcal{N}%
^{(m)} $ are q-concordant with the same $\boldsymbol{\sigma} \in\Sigma^{(m)}$.
Let us consider $A \subseteq[m] $ and two distinct elements $a,b \in A $. For
any given $(j_{1}, \ldots, j_{m} )\in\Pi_{m} $ let us consider the position of
$a$ and $b$, i.e. $\phi_{(j_{1}, \ldots, j_{m} )}(a) $ and $\phi_{(j_{1},
\ldots, j_{m} )}(b) $. We construct a new vector $(i_{1}, \ldots, i_{m})\in
\Pi_{m}$ by just interchanging the position of $a$ and $b$ and leaving unchanged the
positions of all the other components of the vector. By \eqref{voti1}, one has
that $N^{(m)} (j_{1}, \ldots, j_{m} )$ gives a positive contribution to $n_{a}
(A)$ if and only if $N^{(m)} (i_{1}, \ldots, i_{m} )$ gives a positive
contribution to $n_{b} (A)$. Therefore the cardinality of the set of vectors
that give a positive contribution to $n_{a} (A)$ coincides with the
cardinality of vectors that give a positive contribution to $n_{b} (A)$. This claim
concludes the proof because, under the voting situation $\ell$
\textcircled{+}$\mathcal{N}^{(m)} $, the numbers $n_{a} (A)$ and $n_{b} (A)$
are incremented by the same amount, i.e. $\ell$ multiplied by the cardinality
of the set
\[
\{ (j_{1}, \ldots, j_{m} )\in\Pi_{m} : \phi_{(j_{1}, \ldots, j_{m} )}(a)
\leq\phi_{(j_{1}, \ldots, j_{m} )}(i), \text{ for all } i \in A \}.
\]
\end{proof}
\bigskip

It is also useful to state the following result, whose proof is obvious.

\begin{prop} \label{prop+}
Let  $\mathcal{N}^{(m)}$ and $\mathcal{N'}^{(m)} $ be two voting situations. Then, for any $A\subseteq [m] $ and $i \in A$, the number of votes for $i$, under the
voting situation $\mathcal{N}^{(m)} +\mathcal{N'}^{(m)}$ and  when  $A$ is the set of  candidates, is equal to  $n_i (A)+n'_i (A)$.
\end{prop}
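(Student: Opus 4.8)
The plan is to read the result straight off the vote-counting formula \eqref{voti1}, exploiting the fact that it exhibits $n_i(A)$ as a linear function of the entries $N^{(m)}(j_1,\ldots,j_m)$ of the voting situation. First I would write ${\widetilde N}^{(m)}(j_1,\ldots,j_m) := N^{(m)}(j_1,\ldots,j_m) + {N'}^{(m)}(j_1,\ldots,j_m)$ for the entries of the combined voting situation $\mathcal{N}^{(m)} + \mathcal{N'}^{(m)}$, exactly as prescribed by the definition of internal addition, and let ${\widetilde n}_i(A)$ denote the number of votes obtained by $i$ under this situation when $A$ is the set of candidates.

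Next I would apply \eqref{voti1} to ${\widetilde N}^{(m)}$ and observe that the weights appearing there, namely the indicator products $\prod_{j \in A:\, j \neq i} \mathbf{1}_{\{\phi_{(j_1,\ldots,j_m)}(i) < \phi_{(j_1,\ldots,j_m)}(j)\}}$, depend only on the permutation $(j_1,\ldots,j_m)$, on $A$, and on $i$, and in particular do not depend on the voting situation at all. Substituting ${\widetilde N}^{(m)} = N^{(m)} + {N'}^{(m)}$ and distributing the sum over $\Pi_m$ therefore splits ${\widetilde n}_i(A)$ into two sums, each of which is precisely the right-hand side of \eqref{voti1} evaluated at $N^{(m)}$ and at ${N'}^{(m)}$ respectively. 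This yields ${\widetilde n}_i(A) = n_i(A) + n'_i(A)$, which is the assertion.

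There is essentially no obstacle: the statement is just the linearity of \eqref{voti1} as a functional of the voting situation, and the only thing one actually checks is that the combinatorial weights are held fixed across the two summands. If a more combinatorial formulation is preferred, one can argue voter by voter instead: the electorate under $\mathcal{N}^{(m)} + \mathcal{N'}^{(m)}$ is the disjoint union of the electorates under $\mathcal{N}^{(m)}$ and under $\mathcal{N'}^{(m)}$, and each individual voter casts the same ballot in the election on $A$ irrespective of which of the two situations contributes that voter; summing the contributions gives the same conclusion. Either route makes the proof immediate, in agreement with the remark that it is obvious.
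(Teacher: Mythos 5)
Your proof is correct: the paper itself omits the argument, stating only that the proof is obvious, and the linearity of formula \eqref{voti1} in the entries $N^{(m)}(j_1,\ldots,j_m)$ is exactly the intended justification. Nothing is missing.
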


Now we proceed by proving that the claim in Theorem
\ref{forti} can be extended to the set $\Sigma^{(m)}$ of all possible ranking patterns.

\bigskip
On this purpose, we associate to a ranking pattern
$\boldsymbol{\sigma}\in\Sigma^{(m)}$ the index $\mathfrak{I}%
(\boldsymbol{\sigma})$ to be defined as follows.

For $\boldsymbol{\sigma}\in\Sigma^{(m)}$, $A\subseteq\lbrack m]$ with
$|A|\geq2$ and $\ell\in\lbrack|A|]$, consider the subset of $A$ defined by
\[
S(\boldsymbol{\sigma},A,\ell)=\left\{  j\in A:\sigma(A,j)=\ell\right\}
\]
and put
\[
r(\boldsymbol{\sigma},A,\ell)=\left\{
\begin{array}
[c]{ll}%
|S(\boldsymbol{\sigma},A,\ell)|-1,\text{ if }S(\boldsymbol{\sigma},A,\ell
)\neq\emptyset, & \\
0,\text{ otherwise.} &
\end{array}
\right.
\]
Now set
\[
\eta(\boldsymbol{\sigma},A)=\sum_{\ell\in\lbrack|A|]}r(\boldsymbol{\sigma
},A,\ell) ,
\]
\begin{equation}
\mathfrak{I}(\boldsymbol{\sigma})=\sum_{A\subseteq\lbrack m]:|A|\geq2}%
\eta(\boldsymbol{\sigma},A). \label{indice-induzione}%
\end{equation}
Notice that the index $\eta(\boldsymbol{\sigma},A)$ counts the number of ties among the elements of $A$ and that 
$\mathfrak{I}(\boldsymbol{\sigma})=0$ if and only if  $\boldsymbol{\sigma}\in\hat{\Sigma
}^{(m)}$.

\begin{theorem}
\label{forti-deboli} For any $m \in\mathbb{N}$ and for any $\boldsymbol{\sigma
}\in\Sigma^{(m)}$, there exists a voting situation $\mathcal{N}^{(m)}$ which
is $q$-concordant with $\boldsymbol{\sigma}$.
\end{theorem}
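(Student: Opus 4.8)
The plan is to argue by induction on the index $\mathfrak{I}(\boldsymbol{\sigma})$ defined in \eqref{indice-induzione}. The base case $\mathfrak{I}(\boldsymbol{\sigma})=0$ is precisely the case $\boldsymbol{\sigma}\in\hat{\Sigma}^{(m)}$, which is settled by Theorem \ref{forti}. For the inductive step I assume the statement for every ranking pattern of index strictly less than $k$ and treat a $\boldsymbol{\sigma}\in\Sigma^{(m)}$ with $\mathfrak{I}(\boldsymbol{\sigma})=k\geq 1$.

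Since $k\geq 1$, there is a set $A_{0}\subseteq[m]$ and a level $\ell$ for which the block $T:=S(\boldsymbol{\sigma},A_{0},\ell)$ has cardinality $t\geq 2$; I fix one element $a\in T$. I then define two auxiliary ranking patterns $\boldsymbol{\sigma}^{+}$ and $\boldsymbol{\sigma}^{-}$ that coincide with $\boldsymbol{\sigma}$ on every set $A\neq A_{0}$ and that, on $A_{0}$, split the block $T$ into two consecutive blocks: in $\boldsymbol{\sigma}^{+}$ the element $a$ is placed alone at level $\ell$ and $T\setminus\{a\}$ is kept tied at level $\ell+1$, whereas in $\boldsymbol{\sigma}^{-}$ the block $T\setminus\{a\}$ stays tied at level $\ell$ and $a$ is pushed to level $\ell+1$; all other elements of $A_{0}$ retain their levels. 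Each such split turns the single contribution $t-1$ of the block into $t-2$, so $\mathfrak{I}(\boldsymbol{\sigma}^{\pm})=k-1$, and the inductive hypothesis provides voting situations $\mathcal{N}^{+}$ and $\mathcal{N}^{-}$ that are $q$-concordant with $\boldsymbol{\sigma}^{+}$ and $\boldsymbol{\sigma}^{-}$, respectively.

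The heart of the argument is to recombine $\mathcal{N}^{+}$ and $\mathcal{N}^{-}$ so as to restore the full tie inside $T$ without disturbing any other comparison. I would use the three operations and set $\mathcal{N}:=(d^{-}\times\mathcal{N}^{+})+(d^{+}\times\mathcal{N}^{-})$, where, writing $n^{\pm}$ for the vote counts of $\mathcal{N}^{\pm}$, I put $d^{+}=n^{+}_{a}(A_{0})-n^{+}_{c}(A_{0})$ and $d^{-}=n^{-}_{c}(A_{0})-n^{-}_{a}(A_{0})$ for any $c\in T\setminus\{a\}$. By Proposition \ref{trasforma} the multiplications preserve concordance and scale all vote counts, and by Proposition \ref{prop+} the internal addition adds them, so $n_{i}(A)=d^{-}n^{+}_{i}(A)+d^{+}n^{-}_{i}(A)$ for every $A$ and $i$. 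On every set $A\neq A_{0}$ both $\mathcal{N}^{\pm}$ are concordant with the same ranking function $\sigma(A,\cdot)$, and the same holds on $A_{0}$ for every pair not entirely contained in $T$; hence all these comparisons are inherited by $\mathcal{N}$ with the correct strict or tied sign. For the pairs inside $T$ the key observation is that the elements of $T\setminus\{a\}$ remain mutually tied in both $\boldsymbol{\sigma}^{+}$ and $\boldsymbol{\sigma}^{-}$, so $n^{+}_{c}(A_{0})$ and $n^{-}_{c}(A_{0})$ do not depend on the chosen $c\in T\setminus\{a\}$; this collapses what looks like $t-1$ separate equalisation requirements into the single identity $d^{-}\bigl(n^{+}_{a}-n^{+}_{c}\bigr)=d^{+}\bigl(n^{-}_{c}-n^{-}_{a}\bigr)$, which holds by the very choice of $d^{+},d^{-}$. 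Consequently $n_{a}(A_{0})=n_{c}(A_{0})$ for every $c\in T\setminus\{a\}$, all of $T$ receives the same number of votes, and $\mathcal{N}$ is $q$-concordant with $\boldsymbol{\sigma}$.

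The step I expect to be delicate, and which drives the whole construction, is exactly this simultaneous equalisation of an entire tie-block of size $t\geq 3$: a naive pair-swap between two elements of $T$ cannot be realised by a $\pm$ pair of lower-index patterns, because two elements at the same level cannot be separated while both keeping a tie with a third. Splitting off a single element $a$ against the still-tied remainder $T\setminus\{a\}$ is what removes this obstruction, since the positive integers $d^{+}$ and $d^{-}$ are then genuinely well defined (strict precedence in $\boldsymbol{\sigma}^{\pm}$ forces $d^{\pm}\geq 1$) and a single scaling ratio suffices. The remaining verifications---that $\boldsymbol{\sigma}^{\pm}$ are legitimate ranking patterns satisfying \eqref{constrain}, that the levels of the elements outside $T$ are genuinely unchanged by the split, and that $\mathcal{N}$ has integer entries---are routine.
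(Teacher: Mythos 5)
Your proof is correct and follows essentially the same route as the paper: induction on $\mathfrak{I}(\boldsymbol{\sigma})$, splitting one tie-block into a singleton versus the still-tied remainder in the two possible orders, and recombining the two lower-index voting situations with the cross coefficients $d^{\pm}$ (this is exactly the paper's $\boldsymbol{\sigma}'$, $\boldsymbol{\sigma}''$ and the position \eqref{ProducedVotSit}). One correction to a step you dismissed as routine: in $\boldsymbol{\sigma}^{-}$ the element $a$ must be assigned level $\ell+t-1$, not $\ell+1$, since the $t-1$ elements of $T\setminus\{a\}$ now strictly precede it, and otherwise \eqref{constrain} fails whenever $t\geq 3$; this relabelling does not affect any other part of your argument.
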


\begin{proof}
We partition the set $\Sigma^{(m)}$ as the union of the subsets
\[
\Sigma_{c}^{(m)}=\{\boldsymbol{\sigma}\in\Sigma^{(m)}:\mathfrak{I}
(\boldsymbol{\sigma})=\mathfrak{c}\}, \quad\mathfrak{c}=0,1,..., \sum_{\ell
=2}^{m} \binom{m}{\ell} (\ell-1)
\]
and the thesis will be proven by induction on the index $\mathfrak{c}$.

As to the first step of the induction we notice  that $\Sigma_{0}^{(m)}={\hat{\Sigma}}^{(m)}$ and that, for such a case,
the validity of the claim is then guaranteed by Theorem \ref{forti}.

As an inductive hypothesis we assume, for $k\in  \N \cup \{0\}$,  the existence of q-concordant voting
situations for any $\boldsymbol{\sigma}\in{\Sigma}_{\mathfrak{c}}^{(m)}$ with
$\mathfrak{c}\leq k $
%, namely for $\boldsymbol{\sigma}\in\Sigma^{(m)}$ with
%$\mathfrak{I}(\boldsymbol{\sigma})\leq k$,
and we aim to prove the existence of q-concordant voting situations for any
$\boldsymbol{\sigma}\in{\Sigma}_{k+1}^{(m)}$.

In what follows this task will be achieved by determining, for any such
$\boldsymbol{\sigma}$, two suitable ranking patterns $\boldsymbol{\sigma
^{\prime}},\,\boldsymbol{\sigma^{\prime\prime}}\in{\Sigma}_{k}^{(m)}$ and
respectively q-concordant voting situations $\mathcal{N}_{1}^{(m)}$,
$\mathcal{N}_{2}^{(m)}$.
%which satisfy the following condition: a voting
%situation q-concordant with $\boldsymbol{\sigma}$ \ can be obtained by means
%of the operation ?? between $\mathcal{N}_{1}^{(m)}$ and $\mathcal{N}_{2}%
%^{(m)}$.

For fixed $\boldsymbol{\sigma}\in{\Sigma}_{k+1}^{(m)}$ select a set
$A\subseteq\lbrack m]$ and an $\ell\in\lbrack|A|]$ such that
$S(\boldsymbol{\sigma},A,\ell)$ has cardinality larger or equal than two. The
existence of such a set $S(\boldsymbol{\sigma},A,\ell)$ is guaranteed by the
condition $\mathfrak{I}(\boldsymbol{\sigma})= k+1\geq1$. Fix furthermore $j\in
S(\boldsymbol{\sigma},A,\ell)$ and construct $\boldsymbol{\sigma^{\prime}}$
and $\boldsymbol{\sigma^{\prime\prime}}$ as follows. For $B\subseteq\lbrack
m]$ with $B\neq A$ set
\[
\sigma^{\prime}(B,h)=\sigma^{\prime\prime}(B,h)=\sigma(B,h),\quad\forall h\in
B.
\]
For any $h\in A$ such that $\sigma(A,h)\neq\ell$ we again require
\[
\sigma^{\prime}(A,h)=\sigma^{\prime\prime}(A,h)=\sigma(A,h).
\]
For any $i\in S(\boldsymbol{\sigma},A,\ell) \setminus\{j \}$ we set
\[
\sigma^{\prime}(A,i)=\ell,
\]
and
\[
\sigma^{\prime}(A,j)=\ell+|S(\boldsymbol{\sigma},A,\ell)| -1.
\]
Similarly, for any $i\in S(\boldsymbol{\sigma},A,\ell) \setminus\{j \}$, we
set
\[
\sigma^{\prime\prime}(A,i)=\ell+1,
\]
and
\[
\sigma^{\prime\prime}(A,j)=\ell.
\]
Notice that such a construction guarantees the condition $\mathfrak{I}%
(\boldsymbol{\sigma^{\prime}})=\mathfrak{I}(\boldsymbol{\sigma^{\prime\prime}%
})=k$~\ and, in view of the inductive hypotheses, it is possible to find out
\ two voting situations\ $\mathcal{N}_{1}^{(m)}$ and $\mathcal{N}_{2}^{(m)}$
such that $\mathcal{N}_{1}^{(m)}$ is q-concordant with $\boldsymbol{\sigma
^{\prime}}$ and $\mathcal{N}_{2}^{(m)}$ is q-concordant with
$\boldsymbol{\sigma^{\prime\prime}}$.

Denote by $n_{i}^{\prime}(A),n_{i}^{\prime\prime}(A)$ the number of votes
obtained by $i\in A$, respectively under $\mathcal{N}_{1}^{(m)}$ and
$\mathcal{N}_{2}^{(m)}$, when $A$ is the set of candidates. The above
construction guarantees the conditions $n_{i}^{\prime}(A)>n_{j}^{\prime}(A)$
and $n_{i}^{\prime\prime}(A)<n_{j}^{\prime\prime}(A)$. In their turn, such
conditions allow us, for fixed $i,j\in A$ with $\sigma(A,i)=\ell$ and $i\neq
j$, to consider the position
\begin{equation}
\label{ProducedVotSit}
{\mathcal{N}}^{(m)}=(n_{j}^{\prime\prime}(A)-n_{i}^{\prime\prime}%
(A))\times\mathcal{N}_{1}^{(m)}+(n_{i}^{\prime}(A)-n_{j}^{\prime}%
(A))\times\mathcal{N}_{2}^{(m)},
\end{equation}
which defines a bona-fide voting situation. It remains to prove that
${\mathcal{N}}^{(m)}$ is q-concordant with $\boldsymbol{\sigma}$, as wanted.

As in the proof of  Proposition \ref{trasforma} and by Proposition \ref{prop+}, the number of votes obtained by $i$, under
$\mathcal{N}^{(m)}$, when $A$ is the set of candidates, is given by
\[
{n}_{i}(A)=(n_{j}^{\prime\prime}(A)-n_{i}^{\prime\prime}(A))\cdot
n_{i}^{\prime}(A)+(n_{i}^{\prime}(A)-n_{j}^{\prime}(A))\cdot n^{\prime\prime
}_{i}(A)=n_{j}^{\prime\prime}(A)n_{i}^{\prime}(A)-n_{j}^{\prime}%
(A)n_{i}^{\prime\prime}(A);
\]
analogously
\[
{n}_{j}(A)=(n_{j}^{\prime\prime}(A)-n_{i}^{\prime\prime}(A))\cdot
n_{j}^{\prime}(A)+(n_{i}^{\prime}(A)-n_{j}^{\prime}(A))\cdot n_{j}%
^{\prime\prime}(A)=-n_{j}^{\prime}(A)n_{i}^{\prime\prime}(A)+n_{j}%
^{\prime\prime}(A)n_{i}^{\prime}(A).
\]
Therefore all the candidates belonging to $S(\boldsymbol{\sigma},A,\ell)$
obtain, according to ${\mathcal{N}}^{(m)}$, the same number of votes when the
set of candidates is $A$.

By construction of $\mathcal{N}^{(m)}$, all the other relations of equality or
inequality within pairs of candidates, respectively coincide with those
induced by $\mathcal{N} _{1}^{(m)}$ and $\mathcal{N}_{2}^{(m)}$. Therefore
$\mathcal{N}^{(m)}$ is q-concordant with $\boldsymbol{\sigma}$.
\end{proof}

\section{On the set of the values  $n(\mathcal{N}^{(m)})$ for q-concordant voting situations}\label{sec4}
A classical issue in voting theory is the analysis of the number of voters required for obtaining voting situations which may give rise to selected types of voting paradoxes (see in particular \cite{ErdosMoser}, \cite{Stearns}). In this section we point out some general aspects concerning voting situations q-concordant with ranking patterns.
Still in this analysis, it is convenient to separate between the two cases of strict or weak ranking patterns. The latter case will be considered first and a related result will be based on Theorem \ref{forti-deboli}. Later on we turn to considering the case of strict ranking patterns, where one can rely on Theorem 2 of \cite{DS22} (see also the Appendix). As it may be expected, more precise and stronger results will be obtained in such a case.
In order to formulate our results we need the following notation.
Let $m \in \N $ be fixed. I.e. fix the set $[m]$ of possible candidates and define the sets
$\Theta_m $ and $ \hat \Theta_m$ as follows.
\begin{equation}
\label{set-sol}
\Theta_m = \{ \theta \in \N : \forall  \boldsymbol{\sigma}  \in \Sigma^{(m)} \quad \exists  \mathcal{N}^{(m)} \text{ q-concordant with }  \boldsymbol{\sigma} \text{ with } n ( \mathcal{N}^{(m)}  )= \theta   \} ,
\end{equation}
analogously
\begin{equation}
\label{set-sol2}
\hat \Theta_m = \{ \theta \in \N : \forall  \boldsymbol{\sigma}  \in \hat  \Sigma^{(m)} \quad \exists  \mathcal{N}^{(m)} \text{ q-concordant with }  \boldsymbol{\sigma} \text{ with } n ( \mathcal{N}^{(m)}  )= \theta   \} .
\end{equation}
Let $\boldsymbol{\sigma}$ $\in\Sigma^{\left(  m\right)  }$ be an arbitrary
ranking pattern and assume the presence of $ \theta$ voters, with $\theta$
belonging to $\Theta_{m}$. It is assured, in this case, that the different
linear preference rankings (i.e. the different elements of $\Pi_{m}$) can be
distributed in such a way to produce a voting situation q-concordant with
$\boldsymbol{\sigma}$. One can then wonder if such an integer $\theta$ can really exist. We are
going to check next that $\Theta_{m}$ is a non-empty set, containing infinite
elements. It is furthermore obvious from the above definition that $\Theta
_{m}\subseteq\hat{\Theta}_{m}$ and thus $\hat{\Theta}_{m}$ is a non-empty set,
containing infinite elements, as well.

\begin{theorem}\label{infsol}
For any $m \in \N$ the following claims hold
\begin{itemize}
\item[i)] $ \Theta_m$ is non-empty;
\item[ii)]  any $\theta \in \Theta_m$ is a multiple of
$\lcm \{   2, \ldots , m \}$;
\item[iii)]   if $\theta \in \Theta_m$ then $(\theta + m !) \in \Theta_m $.
\end{itemize}
\end{theorem}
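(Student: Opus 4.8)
My plan is to treat the three claims separately, in each case leaning on the existence result Theorem~\ref{forti-deboli} together with the structural Proposition~\ref{trasforma}.

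For (i), the point I would exploit first is that $\Sigma^{(m)}$ is a \emph{finite} set: there are finitely many subsets $A\subseteq[m]$ and, for each, finitely many ranking functions. Invoking Theorem~\ref{forti-deboli}, I would attach to every $\boldsymbol{\sigma}\in\Sigma^{(m)}$ a q-concordant voting situation $\mathcal{N}^{(m)}_{\boldsymbol{\sigma}}$ with a positive number of voters $n_{\boldsymbol{\sigma}}$. Setting $\theta:=\lcm\{n_{\boldsymbol{\sigma}}:\boldsymbol{\sigma}\in\Sigma^{(m)}\}$, each ratio $\theta/n_{\boldsymbol{\sigma}}$ is a positive integer, so the Multiplication operation of Proposition~\ref{trasforma} turns $(\theta/n_{\boldsymbol{\sigma}})\times\mathcal{N}^{(m)}_{\boldsymbol{\sigma}}$ into a voting situation still q-concordant with $\boldsymbol{\sigma}$ but now carrying exactly $\theta$ voters. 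As this works simultaneously for every $\boldsymbol{\sigma}$, I would conclude $\theta\in\Theta_m$.

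For (ii), I would confront an arbitrary $\theta\in\Theta_m$ with the single maximally degenerate pattern $\boldsymbol{\sigma}_0$ given by $\sigma(A,i)=1$ for every $A$ and every $i\in A$ (every election ends in a complete tie); a direct check against Definition~\ref{RankFunctns} shows $\boldsymbol{\sigma}_0\in\Sigma^{(m)}$. Since $\theta\in\Theta_m$, there is a voting situation with $n(\mathcal{N}^{(m)})=\theta$ that is q-concordant with $\boldsymbol{\sigma}_0$. Fixing $k\in\{2,\ldots,m\}$ and any $A$ with $|A|=k$, relation \eqref{glienneuguali} forces $n_i(A)=n_j(A)$ for all $i,j\in A$; combined with the conservation identity $\sum_{i\in A}n_i(A)=\theta$ (each of the $\theta$ voters casts exactly one vote in the election on $A$), this yields $n_i(A)=\theta/k$, an integer, so $k\mid\theta$. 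Letting $k$ range over $\{2,\ldots,m\}$ gives $\lcm\{2,\ldots,m\}\mid\theta$.

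For (iii), the tool I would reach for is the Addition operation of Proposition~\ref{trasforma}. Adding $\ell$ to every entry $N^{(m)}(j_1,\ldots,j_m)$ raises the total voter count by $\ell\cdot|\Pi_m|=\ell\cdot m!$ while, by that proposition, preserving the associated ranking pattern. Starting from $\theta\in\Theta_m$, for each $\boldsymbol{\sigma}$ I would take a q-concordant $\mathcal{N}^{(m)}_{\boldsymbol{\sigma}}$ with $\theta$ voters and pass to $1\,\text{\textcircled{+}}\,\mathcal{N}^{(m)}_{\boldsymbol{\sigma}}$, which remains q-concordant with $\boldsymbol{\sigma}$ and has $\theta+m!$ voters; since this succeeds for all $\boldsymbol{\sigma}$, I obtain $\theta+m!\in\Theta_m$. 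The genuine content sits in (ii): claims (i) and (iii) are little more than bookkeeping on top of Proposition~\ref{trasforma} and the finiteness of $\Sigma^{(m)}$. The step I expect to require the most care is the selection of the degenerate pattern $\boldsymbol{\sigma}_0$ together with the vote-conservation identity $\sum_{i\in A}n_i(A)=\theta$, which is precisely what converts the purely combinatorial ``all-tied'' demand into the arithmetic divisibility constraints that isolate the factor $\lcm\{2,\ldots,m\}$.
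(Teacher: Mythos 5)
Your proposal is correct and follows essentially the same route as the paper: part (i) via the $\lcm$ of the voter counts supplied by Theorem~\ref{forti-deboli} together with the Multiplication operation, part (ii) via an all-tied ranking pattern forcing $k\mid\theta$ for each cardinality $k$, and part (iii) via the Addition operation, which increases the voter count by exactly $m!$. The only cosmetic difference is that you use one globally tied pattern to handle all cardinalities at once, while the paper invokes a tied pattern for each fixed $A$ with $|A|=k$; this changes nothing of substance.
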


\begin{proof} We start  by proving  i). By Theorem \ref{forti-deboli}, for
$\boldsymbol{\sigma} \in \Sigma^{(m)}$ one can select a voting situation
$ \mathcal{N}_{\boldsymbol{\sigma}}^{(m)}  $ q-concordant with $\boldsymbol{\sigma}$ and, by recalling the position
(\ref{TotalNumberofVoters}),  set $n_{\boldsymbol{\sigma}}:=n\left(
\mathcal{N}_{\boldsymbol{\sigma}}^{(m)}\right)  $. Set furthermore
$$
a (m ) := \lcm \{       n_{\boldsymbol{\sigma} }   :  \boldsymbol{\sigma} \in \Sigma^{(m)}  \}.
$$
By Proposition \ref{trasforma}, the voting situation $\frac{a\left(  m\right)
}{n_{\boldsymbol{\sigma}}}\times\mathcal{N}_{\boldsymbol{\sigma}}^{(m)}$ is
q-concordant with $\boldsymbol{\sigma}$ \ as well. Moreover,
\[
n\left(  \frac{a(  m)  }{n_{\boldsymbol{\sigma}}}\times
\mathcal{N}_{\boldsymbol{\sigma}}^{(m)}\right)  =\frac{a(  m)
}{n_{\boldsymbol{\sigma}}}\cdot n\left(  \mathcal{N}_{\boldsymbol{\sigma}%
}^{(m)}\right)  =a(  m)  .
\]
Thus $a (m) \in\Theta_{m}$ and $\Theta_{m}$ is non-empty.

Proof of ii). Let $A \subseteq [m ] $ with $|A|=k \in \{2, \ldots , m\}$ and consider  the special ranking pattern $\hat{ \boldsymbol{\sigma} }\in \Sigma^{(m)}$ such that $\hat{ \sigma} (A, i) =1 $ for any $i \in A$. Therefore, for any voting situation $ \mathcal{N}_{\hat{ \boldsymbol{\sigma}}}^{(m)} $  q-concordant with $\hat{ \boldsymbol{\sigma}}$, the integer  $n_{\hat{\boldsymbol{\sigma}} } $  should be a multiple of $k$. Thus all the elements of
$\Theta_m$ are multiples of $\lcm \{ 2, \ldots , m \}$.

Proof of iii). In view of i) we can fix $\theta\in\Theta_{m}$. For
$\boldsymbol{\sigma}\in\Sigma^{(m)}$, let  $\mathcal{N}_{\boldsymbol{\sigma}%
}^{(m)}$ be a voting situation q-concordant with $\boldsymbol{\sigma}$  and
such that $n_{\boldsymbol{\sigma}} =\theta$.
By Proposition \ref{trasforma}, $1$\textcircled{+}$\mathcal{N}%
_{\boldsymbol{\sigma}}^{(m)}$ is q-concordant with $\boldsymbol{\sigma}$, as
well. The proof can thus be concluded by noticing that $n(1$%
\textcircled{+}$\mathcal{N}_{\boldsymbol{\sigma}}^{(m)})=m!+n(\mathcal{N}%
_{\boldsymbol{\sigma}}^{(m)})=m!+\theta$.
\end{proof}
As a consequence of items  i) and iii) of Theorem \ref{infsol} it is seen that, for whatever $m \in \N $, the set $\Theta_m$ contains infinite natural numbers.

\medskip
We now turn to specifically consider the set $ \hat \Theta_m  \supset \Theta_m$. In this case it is possible to explicitly determine an element $\bar \theta_m \in  \hat \Theta_m $ and, furthermore, to check that all the integers large enough
are eventually contained in $ \hat \Theta_m $.
On this purpose it necessary, once again, to rely on Theorem 2  of \cite{DS22} and on the arguments contained in Appendix and related with the load-sharing models of the special type $LS(\boldsymbol{\varepsilon},\boldsymbol{\sigma})$. Set
\begin{equation}
\bar{\theta}_{m}:=\prod_{h=2}^{m}[h\cdot z_{m}^{h-1}-\frac{h(h-1)}%
{2}],\label{DefinThetaSegnato}%
\end{equation}
where
\begin{equation}
z_{m}:=17\cdot m\cdot m! . 
\end{equation}
We notice that the integer $z_{m}$ is such that, for the quantities
$\varepsilon(2),...,\varepsilon(m)$ introduced in \eqref{stimaepsilon2} one has
$\varepsilon(h)=z_{m}^{-h+1}$.
\begin{theorem}
\label{infsol2} For any $m \in\N$ one has

\begin{itemize}
\item[i)] the integer $\bar{\theta}_{m}$ belongs to $\hat{\Theta}%
_{m}$;

\item[ii)] each integer $\theta\geq\bar{\theta}_{m}\cdot m!$ belongs
to $\hat{\Theta}_{m}$.
\end{itemize}
\end{theorem}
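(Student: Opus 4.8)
The plan is to handle both items through one mechanism: start from the load-sharing voting situation furnished by Theorem \ref{EpsilonEspliciti}, and then reshape it with the three operations of Proposition \ref{trasforma} and Proposition \ref{prop+}.

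For item i) I would first record the combinatorial shape of the probabilities $p_m$ attached to the model $LS(\boldsymbol{\varepsilon},\boldsymbol{\sigma})$ with $\varepsilon(h)=z_m^{-(h-1)}$. Since a load-sharing model loses its components one at a time, $p_m(j_1,\ldots,j_m)$ factors as a telescoping product over $r=1,\ldots,m-1$ of the conditional ``next-to-fail'' probabilities, the $r$-th factor being evaluated on the surviving set $A_r=\{j_r,\ldots,j_m\}$ of cardinality $h=m-r+1$. With the level-$h$ rates chosen (as in the Appendix) affine in the local ranks and proportional to $z_m^{h-1}-(\sigma(A_r,\cdot)-1)$, the denominator of the level-$h$ factor is exactly
\[
f_h:=h\,z_m^{h-1}-\frac{h(h-1)}{2},
\]
because for a strict ranking the values $\sigma(A_r,\cdot)$ run through $\{1,\ldots,h\}$ and $\sum_{\ell=1}^{h}(\ell-1)=\frac{h(h-1)}{2}$, while the numerators are integers. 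Hence the common denominator of all the $p_m$ is $\prod_{h=2}^{m}f_h=\bar{\theta}_m$, the numbers $N^{(m)}=\bar{\theta}_m\,p_m$ are integers summing to $\bar{\theta}_m$, and Proposition \ref{EquivConcorda} converts this into a voting situation q-concordant with $\boldsymbol{\sigma}$ having exactly $\bar{\theta}_m$ voters. Since $\bar{\theta}_m$ is independent of $\boldsymbol{\sigma}$, the same value serves every $\boldsymbol{\sigma}\in\hat{\Sigma}^{(m)}$, whence $\bar{\theta}_m\in\hat{\Theta}_m$.

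For item ii) the naive attempt — generating $\hat{\Theta}_m$ from $\bar{\theta}_m$ by multiplication and by the operation $\ell\,\text{\textcircled{+}}\,(\cdot)$, which changes the count by $\ell\cdot m!$ — only reaches integers lying in the residue classes modulo $m!$ spanned by the multiples of $\bar{\theta}_m$, that is the multiples of $\gcd(\bar{\theta}_m,m!)$; this is the real difficulty, since the statement asserts membership in \emph{every} residue class. To break this residue obstruction I would inject single voters: adding one voter with an arbitrary preference ranking alters each count $n_i(A)$ by at most one, hence alters every margin $n_i(A)-n_j(A)$ by at most one. Consequently, if all margins of a q-concordant situation are at least $m!$, one may append any number $b\le m!-1$ of extra voters and still preserve all the strict comparisons. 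Concretely, for a fixed $\theta\ge\bar{\theta}_m\cdot m!$ and a fixed $\boldsymbol{\sigma}\in\hat{\Sigma}^{(m)}$, I pass from the situation $\mathcal{N}_{\boldsymbol{\sigma}}^{(m)}$ of item i) to $m!\times\mathcal{N}_{\boldsymbol{\sigma}}^{(m)}$: strictness forces every base margin to be a nonzero integer, so every margin of the scaled situation is at least $m!$. I then write $\theta-\bar{\theta}_m\cdot m!=m!\,a+b$ with $a\ge0$ and $0\le b\le m!-1$, apply $a\,\text{\textcircled{+}}\,(\cdot)$ (which, incrementing every $n_i(A)$ within a fixed $A$ by the same amount, preserves q-concordance by Proposition \ref{trasforma} and keeps all margins $\ge m!$), and finally add, via the internal addition of Proposition \ref{prop+}, an arbitrary voting situation with $b$ voters. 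By the margin estimate above the last step preserves every strict comparison; the total count is $\bar{\theta}_m\cdot m!+m!\,a+b=\theta$, and since the decomposition of $\theta$ and the whole recipe are independent of $\boldsymbol{\sigma}$, the same $\theta$ is realized for every strict ranking pattern, giving $\theta\in\hat{\Theta}_m$.

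The step I expect to be most delicate is the factorization underlying item i): one must check that the load-sharing rates of $LS(\boldsymbol{\varepsilon},\boldsymbol{\sigma})$ yield precisely the level denominators $f_h$, and that the choice $z_m=17\,m\cdot m!$ keeps $\varepsilon(h)=z_m^{-(h-1)}$ inside the admissibility range required by Theorem \ref{EpsilonEspliciti} for $p$-concordance; both facts rest on the Appendix. For item ii) the only point to watch is the uniformity over $\boldsymbol{\sigma}$ of the bound ``all margins $\ge m!$'', but this is automatic from strictness together with the scaling by $m!$, so no finer control on the sizes of the margins — nor any information on $\gcd(\bar{\theta}_m,m!)$ — is needed.
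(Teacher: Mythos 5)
Your proposal is correct and follows essentially the same route as the paper: item i) via clearing the level-$h$ denominators $h z_m^{h-1}-\tfrac{h(h-1)}{2}$ from the telescoping load-sharing product so that $\bar\theta_m$ is a common denominator independent of $\boldsymbol{\sigma}$, and item ii) via scaling by $m!$ to force all margins $\geq m!$, shifting with \textcircled{+}, and absorbing the residue $\theta \bmod m!$ by at most $m!-1$ extra voters, which cannot overturn any strict comparison. The only cosmetic difference is that the paper adds the leftover $b=\theta\bmod m!$ voters via explicit indicators $\chi_i$ attached to the first $b$ permutations, whereas you allow an arbitrary $b$-voter situation; the margin argument is identical.
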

\begin{proof}
By Theorem 2  of \cite{DS22} and Corollary \ref{MainCrollary},
%and \eqref{dispoNv}
for a given $\boldsymbol{\sigma}\in \hat \Sigma^{(m)}$ the $LS(\boldsymbol{\varepsilon},\boldsymbol{\sigma})$ model with
$$
\boldsymbol{\varepsilon} =(0, z_m^{-1},z_m^{-2}, \ldots , z_m^{-m+1} )
$$
is p-concordant with $\boldsymbol{\sigma}$. By \eqref{dispoNv}, \eqref{scelta1Nv} and \eqref{MdefNv},
we  can write an explicit formula for  the probabilities $p_m (j_1, \ldots , j_m)$'s related to such a model. For any $(j_1, \ldots , j_m) \in \Pi_m $,
$$
p_m (j_1, \ldots , j_m)= \frac{1-(\sigma([m],j_1)  -1)z_m^{1-m} }{\hat M_0}
\cdot  \frac{1-(\sigma([m] \setminus \{j_1 \},j_2)  -1)z_m^{2-m} }{\hat M_1} \cdot
$$
\begin{equation}\label{zenaide}
\cdot  \frac{1-(\sigma([m] \setminus \{j_1 , j_2\},j_3)  -1)z_m^{3-m} }{\hat M_2}
\cdots  \frac{1-( \sigma([m]\setminus \{j_1, j_2, \ldots , j_{m-2}\},j_{m-1})-1 )z_m^{-1}}{\hat  M_{m-2}}.
\end{equation}
By \eqref{MdefNv}, the $k$-th term of the previous product  is
$$
\frac{1-(\sigma([m] \setminus \{j_1 , \ldots , j_{k-1}\},j_k)  -1)z_m^{k-m} }{\hat M_{k-1}} = \frac{1-(\sigma([m] \setminus \{j_1 , \ldots , j_{k-1}\},j_k)  -1)z_m^{k-m} }{m+1-k +\frac{(m+1-k)(m-k ) }{2}z_m^{k-m} }=
$$
\begin{equation}\label{rapporto}
=\frac{z_m^{m-k} -\sigma([m] \setminus \{j_1 , \ldots , j_{k-1}\},j_k)  +1}{
z_m^{m-k}(m+1-k) +\frac{(m+1-k)(m-k ) }{2} }.
\end{equation}
The numerator and denominator of \eqref{rapporto} are integers.
Thus, multiplying by the denominator 
\begin{equation}\label{pepepe}
\left (z_m^{m-k}(m+1-k) +\frac{(m+1-k)(m-k ) }{2} \right )
\end{equation}
the ration \eqref{rapporto}, one obtains an integer number.
Noticing  that
$$
  \prod_{k=1}^{m-1}      \left (z_m^{m-k}(m+1-k) +\frac{(m+1-k)(m-k ) }{2} \right )      =   \prod_{h=2}^{m} \left  [h z_m^{h-1}-
\frac{h(h-1)}{2}   \right ] ,
$$
the equation \eqref{zenaide} yields
\begin{equation}\label{qesp}
%N^{(m)}(j_1, \ldots , j_m) =
p_m (j_1, \ldots , j_m) \cdot \prod_{h=2}^{m} \left  [h z_m^{h-1}-
\frac{h(h-1)}{2}   \right ] =
p_m (j_1, \ldots , j_m) \cdot \bar \theta_m
\in \N, \text{ } (j_1, \ldots , j_m)  \in \Pi_m.
\end{equation}
The integer $\bar \theta_m := \prod_{h=2}^m[h z_m^{h-1}- \frac{h(h-1)}{2}] $ does not depend on $\boldsymbol{\sigma}$, hence $\bar \theta_m \in \hat\Theta_m$, by Proposition \ref{EquivConcorda}.

\medskip

We now prove ii). Let us label by $\mathbf{j}_1, \mathbf{j}_2,
\ldots , \mathbf{j}_{m!} $ the elements in $\Pi_m $.
 For %$   \theta =\bar \theta_m  $ and
$\boldsymbol{\sigma}\in \hat \Sigma^{(m)}$ we consider the voting situations $\mathcal{N}^{(m)}_{\boldsymbol{\sigma}, \bar \theta_m }$ q-concordant with $\boldsymbol{\sigma}$ and such that
$ n( \mathcal{N}^{(m)}_{\boldsymbol{\sigma}, \bar \theta_m }  ) =\bar \theta_m    $.
For $c \in \N$ %$   \theta = m! \bar \theta  $
we now consider the voting situation
$ \mathcal{N}^{(m)}_{\boldsymbol{\sigma},  c \cdot  \bar \theta_m } = c \times \mathcal{N}^{(m)}_{\boldsymbol{\sigma} , \bar  \theta_m} $.

Thus the following condition
\begin{equation}\label{condiz}
\forall A \subseteq [m], \text { with } i, \, j \in A \text{ and } i \neq j \text{ then } |n_i(A)-n_j (A)| \geq c
\end{equation}
is satisfied when the quantities $n_i(A)$ and $n_j (A)$ are those related to $ \mathcal{N}^{(m)}_{\boldsymbol{\sigma}, c  \cdot\bar   \theta_m }$.
For $\theta>\bar{\theta}_m \cdot m!$ we will construct a voting situation
$$
\mathcal{N}^{(m)}_{\boldsymbol{\sigma} , \theta } = \{N^{(m)}_{\boldsymbol{\sigma} , \theta }(\mathbf{j}_i)  : i \in [m!] \}
$$
q-concordant with $\boldsymbol{\sigma} $ and such that
$n (\mathcal{N}^{(m)}_{\boldsymbol{\sigma} , \theta }) = \theta$.
Let
\begin{equation}\label{numero-preferenze}
N^{(m)}_{\boldsymbol{\sigma} , \theta }(\mathbf{j}_i) = m! \, \cdot \,N^{(m)}_{\boldsymbol{\sigma} ,  \bar \theta_m }(\mathbf{j}_i) + \left \lfloor
 \frac{\theta  }{m ! }\right \rfloor - \bar \theta_m + \chi_i ,
\end{equation}
where
$$
\chi_i = \left \{
\begin{array}{ll}
1, \text{ if }  [\theta  \mod(m!) ]- i \geq 0, \\
0, \text{ otherwise.}
\end{array}
 \right .
$$
Let us check that the cardinality of $\mathcal{N}^{(m)}_{\boldsymbol{\sigma} , \theta } $ is $\theta$:
$$
n (\mathcal{N}^{(m)}_{\boldsymbol{\sigma} , \theta })= \sum_{i = 1}^{m!} N^{(m)}_{\boldsymbol{\sigma} , \theta }(\mathbf{j}_i) = \sum_{i = 1}^{m!} \left [ m! \, \cdot
 \,N^{(m)}_{\boldsymbol{\sigma} ,  \bar \theta_m }(\mathbf{j}_i) + \left \lfloor
 \frac{\theta  }{m ! }\right \rfloor - \bar \theta_m + \chi_i \right ] =
$$
$$
=m !\cdot  n (\mathcal{N}^{(m)}_{\boldsymbol{\sigma} , \bar \theta_m }) +  \left \lfloor
 \frac{\theta  }{m ! }\right \rfloor \cdot m!  - \bar  \theta_m \cdot m!
+ \sum_{i=1}^{m!} \chi_i =
$$
$$
=m ! \cdot \bar \theta_m +  \left \lfloor
 \frac{\theta  }{m ! }\right \rfloor \cdot m!  - \bar  \theta_m \cdot m! + [\theta  \mod(m!) ] = \theta.
$$
Now fix $\theta$ as  a multiple  of $m!$ and such that $ \theta \geq  \bar \theta_m \cdot m!$. In such a case  the
voting situation  $\mathcal{N}^{(m)}_{\boldsymbol{\sigma} , \theta }$  (defined
by \eqref{numero-preferenze}) coincides with
$$
\left (\frac{\theta}{m!} - \bar \theta_m \right )\textcircled{+} \left [ ( m! ) \times  \mathcal{N}^{(m)}_{\boldsymbol{\sigma} , \bar \theta_m }\right  ].
$$
Thus, by Proposition \ref{trasforma},  $\mathcal{N}^{(m)}_{\boldsymbol{\sigma} , \theta }$ is q-concordant with $\boldsymbol{\sigma}$ as well.

We now consider a $\theta' > \bar \theta_m \cdot m ! $ which is not a multiple of $m!$. We consider 
$$
\theta = \left \lfloor
 \frac{\theta'  }{m ! }\right \rfloor   m! 
$$
The condition \eqref{condiz}, with $c = m!$, is satisfied
with $n_i(A)$ and $n_j (A)$ that are quantities related to $\mathcal{N}^{(m)}_{\boldsymbol{\sigma} , \theta }$. In order to conclude the proof it is enough to notice that, being $\sum_{i=1}^{m!} \chi_i \leq  m ! -1$,   all the inequalities  between  $n_i(A)$ and $n_j (A)$ are maintained for  the voting situation $\mathcal{N}^{(m)}_{\boldsymbol{\sigma} , \theta' }$.
\end{proof}

 For a strict ranking pattern $\boldsymbol{\sigma}\in\hat{\Sigma}^{(m)}$ not
only we know that voting situations exist which are q-concordant and are based
on $\bar{\theta}_{m}$ voters, as guaranteed by the preceding result. Furthermore, we can
also produce explicitly one of such voting situations by letting
\[
N^{(m)}(j_{1},\ldots,j_{m}):=\left[  z_{m}^{m-1}-\sigma([m],j_{1})+1\right]
\times\left[  z_{m}^{m-2}-\sigma([m]\setminus\{j_{1}\},j_{2})+1\right]  \times
\]%
\begin{equation}
\times\left[  z_{m}^{m-3}-\sigma([m]\setminus\{j_{1},j_{2}\},j_{3})+1\right]
\times\cdots\times\left[  z_{m}-\sigma([m]\setminus\{j_{1},j_{2}%
,\ldots,j_{m-2}\},j_{m-1})+1\right]  .\label{SpecialVotSit}%
\end{equation}
 As a matter of fact,  the next result will be proven by just rephrasing arguments contained in the proof of Theorem \ref{infsol2}.
\begin{theorem}
\label{vote} For any integer $m\geq2$, for any $\boldsymbol{\sigma}\in
\hat{\Sigma}^{(m)}$, the voting situation $\mathcal{N}_{\boldsymbol{\sigma
},\bar{\theta}}^{(m)}:=\{N^{(m)}(j_{1},\ldots,j_{m}):(j_{1},\ldots,j_{m}%
)\in\Pi_{m}\}$ with $N^{(m)}(j_{1},\ldots,j_{m})$ defined by
\eqref{SpecialVotSit}   is q-concordant with $\boldsymbol{\sigma}$ and
$n(\mathcal{N}_{\boldsymbol{\sigma},\bar{\theta}_{m}}^{(m)})=\bar{\theta}_{m}$.
\end{theorem}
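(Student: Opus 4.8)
The plan is to recognize that the explicit formula \eqref{SpecialVotSit} is nothing but the product of the numerators already computed in the proof of Theorem \ref{infsol2}, and that $\bar\theta_m$ is exactly the product of the corresponding denominators. Once this correspondence is made explicit, the claim follows immediately from Proposition \ref{EquivConcorda}.

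First I would compare the $k$-th factor of \eqref{SpecialVotSit}, namely $z_m^{m-k}-\sigma([m]\setminus\{j_1,\ldots,j_{k-1}\},j_k)+1$, with the numerator of the $k$-th ratio in \eqref{rapporto}. These coincide for every $k\in\{1,\ldots,m-1\}$, so that $N^{(m)}(j_1,\ldots,j_m)$ equals the product over $k$ of the numerators appearing in \eqref{rapporto}.

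Second, I would recall from the proof of Theorem \ref{infsol2} that, by \eqref{zenaide}, $p_m(j_1,\ldots,j_m)$ is precisely the product over $k$ of the ratios in \eqref{rapporto}, and that $\bar\theta_m=\prod_{k=1}^{m-1}\bigl[z_m^{m-k}(m+1-k)+\tfrac{(m+1-k)(m-k)}{2}\bigr]$ is precisely the product of the denominators of those same ratios. Multiplying $p_m(j_1,\ldots,j_m)$ by $\bar\theta_m$ therefore cancels all the denominators and leaves the product of numerators, yielding $N^{(m)}(j_1,\ldots,j_m)=\bar\theta_m\cdot p_m(j_1,\ldots,j_m)$, a natural number in view of \eqref{qesp}.

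Third, this identity is exactly the relation \eqref{q=pBis} with $n=\bar\theta_m$, for the $LS(\boldsymbol{\varepsilon},\boldsymbol{\sigma})$ model which, by Theorem 2 of \cite{DS22}, is $p$-concordant with $\boldsymbol{\sigma}$; Proposition \ref{EquivConcorda} then yields at once that the voting situation $\mathcal{N}_{\boldsymbol{\sigma},\bar\theta}^{(m)}$ is $q$-concordant with $\boldsymbol{\sigma}$. Finally, summing $N^{(m)}(j_1,\ldots,j_m)=\bar\theta_m\,p_m(j_1,\ldots,j_m)$ over $\Pi_m$ and using $\sum_{(j_1,\ldots,j_m)\in\Pi_m}p_m(j_1,\ldots,j_m)=1$ gives $n(\mathcal{N}_{\boldsymbol{\sigma},\bar\theta_m}^{(m)})=\bar\theta_m$. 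I expect no genuine obstacle here: the only care needed is the factor-by-factor matching between \eqref{SpecialVotSit} and the numerators in \eqref{rapporto}, which is just the bookkeeping already carried out in the proof of Theorem \ref{infsol2}.
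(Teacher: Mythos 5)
Your proposal is correct and follows essentially the same route as the paper's own proof: identify $N^{(m)}(j_1,\ldots,j_m)$ with $\bar\theta_m\cdot p_m(j_1,\ldots,j_m)$ from \eqref{qesp}, invoke the $p$-concordance of the $LS(\boldsymbol{\varepsilon},\boldsymbol{\sigma})$ model together with Proposition \ref{EquivConcorda}, and sum over $\Pi_m$ to get the voter count. The paper itself states that the theorem is proven "by just rephrasing arguments contained in the proof of Theorem \ref{infsol2}," which is exactly what you do.
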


\begin{proof}
For any  $(j_{1},\ldots,j_{m})\in\Pi_{m}$ the quantity $N^{(m)}(j_{1}%
,\ldots,j_{m})$ in \eqref{SpecialVotSit} coincides with the product $p_m (j_1, \ldots , j_m) \cdot \bar \theta_m$ in \eqref{qesp}. By Theorem \ref{EpsilonEspliciti} in the Appendix and by
\eqref{qspiega} we know that the model $LS(\boldsymbol{\varepsilon
},\boldsymbol{\sigma})$ is p-concordant with $\boldsymbol{\sigma}$. By
applying Proposition \ref{EquivConcorda} we obtain that $\mathcal{N}%
_{\boldsymbol{\sigma},\bar{\theta}_{m}}^{(m)}$ is q-concordant with
$\boldsymbol{\sigma}$. The claim $n(\mathcal{N}_{\boldsymbol{\sigma}%
,\bar{\theta}_{m}}^{(m)})=\bar{\theta}_{m}$ immediately follows by \eqref{qesp}.
\end{proof}

\section{Discussion and concluding remarks}\label{sec5}
In this paper we have considered a standard context of voting theory based on a
family of possible elections with a fixed set of voters and a fixed set of
potential candidates. The different elections are respectively
characterized by different subsets of participating candidates and, looking
at the differences among their respective outcomes, attention has been focused on the concept of
voting situation q-concordant with a ranking pattern.
For such voting
situations we have obtained results concerning with the issues of existence and
construction and with cardinalities of the related
voters' populations. Such results aim to show that it is possible
to observe any arbitrary set of elections' outcomes and they thus lead to conclusions in the same direction of
classical results by Saari (see e.g. \cite{Saari(dictionary)}).

For our developments we have used results based on the concept of
ranking pattern $p$-concordant with probability models for non-negative random
variables and on a special class of load-sharing models (as presented in
\cite{DS22}).

In our analysis we have distinguished between the two cases of weak or strict
 ranking patterns, aiming to highlight the differences between the two cases.
 Obviously, the cardinality $|\hat{\Sigma}^{(m)}|$ of the set of all the strict
ranking patterns is smaller than $|\Sigma^{(m)}|$. In particular we point
out the following relations:
\[
\prod_{h=2}^{m}(h!)^{\binom{m}{h}}=|\hat{\Sigma}^{(m)}|<|\Sigma^{(m)}|<\prod_{h=2}^{m}h^{h\binom{m}{h}}.
\]
 where the last inequality can be explained by noticing that the term $\binom{m}{h}$ denotes the
number of subsets of $[m]$ with cardinality $h$ and $h^{h}$ is the total
number of functions $\sigma(A,\cdot):A\rightarrow\{1,2,\ldots,|A|\}$, which
are not necessarily ranking functions.

Constructive and quantitative results have been given in a closed form in the case of strict ranking patterns, 
whereas the results for the weak case are not in a closed form. 
%For the weak case, results
% the case of strict
%ranking patterns whereas, essentially, only qualitative and non-constructive
%results are possible for the weak case.
 In this respect, however, a main implication of our work
concerns with the existence of probability models $p$-concordant with ranking
patterns. In fact, the passage to the setting of voting theory allows us to
extend an existence result, given in \cite{DS22}, from the strict case to the
weak case. 
More in details this issue will be discussed in the next Remark
\ref{ExtensionToWeak}. 
Such an extension may have interesting applications also to the analysis of paradoxes arising in different fields of probability such as 
the context of intransitive dice, the classic
games based  on the occurrence of competing
events in a sequence of trials, times of first occurrence for different words
 in random sampling of letters from an alphabet (see e.g. \cite{D21ranking}, \cite{DSS2}, \cite{GO1981}).

\begin{rem}
\label{ExtensionToWeak}
For given $m\geq2$, let $\boldsymbol{\sigma}\in
\Sigma^{(m)}$ be a given weak ranking pattern. By means of Theorems
\ref{forti} and \ref{forti-deboli}, existence has been proven for a voting
situation $\mathcal{N}_{\boldsymbol{\sigma}}^{(m)}\equiv$
$\{N_{\boldsymbol{\sigma}}^{(m)}(j_{1},\ldots,j_{m}):(j_{1},\ldots,j_{m}%
)\in\Pi_{m}\}$ q-concordant with $\boldsymbol{\sigma}$.\ In view of
Proposition \ref{EquivConcorda}, we know that a probability model p-concordant
with $\boldsymbol{\sigma}$  is one satisfying the condition \eqref{q=pBis}.

% a joint probability distribution for $\left(
%X_{1},..,X_{n}\right)  $ such that the probability
%\[
%p_{m}(j_{1},\ldots,j_{m})=\mathbb{P}(J_{1}=j_{1},\,J_{2}=j_{2},\,\ldots
%,\,J_{m}=j_{m}),\quad\forall(j_{1},\ldots,j_{m})\in\Pi_{m},
%\]
%with $J_{1},\ldots,J_{m}$ defined in \eqref{random}, is proportional to
%$N_{\boldsymbol{\sigma}}^{(m)}(j_{1},\ldots,j_{m})$. \ 
The existence of a
load-sharing model satisfying such a condition, on the other hand, is
guaranteed by Theorem 1 in the previous article \cite{DS22}. We can thus
conclude that, for any $\boldsymbol{\sigma}\in\Sigma^{(m)}$, there exists a (order dependent) 
load-sharing model \ p-concordant with $\boldsymbol{\sigma}$. Such a
conclusion then fills a gap, present in \cite{DS22}, where the existence of a
survival model p-concordant with $\boldsymbol{\sigma}$ was only guaranteed for
the case of a strict ranking pattern $\boldsymbol{\sigma}\in\hat{\Sigma}%
^{(m)}$. The following difference between the two cases emerges however. In
the case $\boldsymbol{\sigma}\in\hat{\Sigma}^{(m)}$ the existence is proven of
a non-order dependent load sharing model  p-concordant with
$\boldsymbol{\sigma}$ and it can be identified in terms of a closed formula.
In the case $\boldsymbol{\sigma}\in\Sigma^{(m)} \setminus \hat{\Sigma}^{(m)}$ we must
rely on Theorem 1 in \cite{DS22} which guarantees the existence of a
p-concordant  load sharing model that will generally be order dependent. Furthermore, the voting situation 
 $\mathcal{N}_{\boldsymbol{\sigma}}^{(m)} $ is
determined, in Theorem \ref{forti-deboli} on the basis of an inductive procedure, so that the related
construction is less explicit.
\end{rem}

Now we  present a few examples  in order to illustrate other aspects of the above results.
 The procedure for constructing a $q$-concordant voting situation starting from a strict ranking pattern is demonstrated in the next example. For the resulting voting situation, the corresponding number of voters is typically very large. One can however carry out some procedure to pass to a different $q$-concordant voting situation, in which the number of voters can be drastically reduced. On this purpose the operations on the voting situations, as defined in Section \ref{section 3}, can be conveniently used.
\begin{example}
\label{extra} For $m=3$ consider the strict ranking pattern $\boldsymbol{\sigma}$ defined as follows:
\[
\sigma([3],1)=1,\sigma([3],2)=2,\sigma([3],3)=3,
\]%
\[
\sigma(\{1,2\},1)=2,\sigma(\{1,2\},2)=1,
\]%
\[
\sigma(\{1,3\},1)=2,\sigma(\{1,3\},3)=1,
\]%
\[
\sigma(\{2,3\},2)=2,\sigma(\{2,3\},3)=1.
\]
In view of Theorems \ref{th1semplificato} and \ref{EpsilonEspliciti} in the Appendix p-concordant load-sharing models can be singled out by suitably
choosing coefficients $\varepsilon(2),\varepsilon(3)$ and by consequently imposing parameters of the following form:
\[
\mu_{1}(\emptyset)=1,\text{ }\mu_{2}(\emptyset)=1-\varepsilon(3),\text{ }%
\mu_{3}(\emptyset)=1-2\varepsilon(3)
\]%
\[
\mu_{1}(\{2\})=1-\varepsilon(2),\text{ }\mu_{3}(\{2\})=1,\mu_{2}%
(\{1\})=1-\varepsilon(2),
\]%
\[
\mu_{3}(\{1\})=1,\text{ }\mu_{1}(\{3\})=1-\varepsilon(2),\mu_{2}(\{3\})=1.
\]
By taking the special choice \eqref{stimaepsilon2}, one obtains a load sharing
model with
\[
\varepsilon(2)=\frac{1}{17\times9\times2}=\frac{1}{306}{\small ,\ }%
\varepsilon(3)=\varepsilon(2)^{2}=\frac{1}{306^{2}},
\]
that it is seen to be p-concordant by using Corollary \ref{MainCrollary} in
the Appendix. For brevity sake, set $a=\frac{1}{306}$. In view of formula
\eqref{MdefNv} one obtains that the corresponding set function
$M$, defined in \eqref{muodeNv}, is given by $\widehat{M}_{3}=3(1-a^{2}),$ $\widehat{M}_{2}=2-a$. By
formula \eqref{dispoNv}, concerning the probabilities of permutations, we
obtain
\[
p(1,2,3)=\frac{1}{3(1+a)(2-a)},\text{ }p(1,3,2)=\frac{1}{3(1-a)^{2}(2-a)},
\]%
\[
p(2,1,3)=\frac{1-a}{3(2-a)},\text{ }p(2,3,1)=\frac{1}{3(2-a)},
\]%
\[
p(3,1,2)=\frac{1-2a^{2}}{3(1+a)(2-a)},\text{ }p(3,2,1)=\frac{1-2a^{2}%
}{3(1-a)^{2}(2-a)}.
\]
By multiplying all the above probabilities by $3(1-a)^{2}(2-a)$ we obtain
polynomials of degrees at most equal to three. Consequently a set of all
natural numbers is obtained by multiplying by $(306)^{3}$ in order to
determine a q-concordant voting situation $\widehat{\mathcal{N}}$. Starting
from $\widehat{\mathcal{N}}$ and suitably applying the operations of
Multiplication, Addition, and Internal addition, as defined in Section \ref{section 3},  we
can progressively obtain different voting situations admitting smaller and
smaller number of voters, but still q-concordant with $\boldsymbol{\sigma}$.
At the end we can obtain the following voting situation where the number of
voters is $43$:
\[
N(1,2,3)=2,\text{ }N(1,3,2)=15, N(2,1,3)=1,
\]%
\[
N(2,3,1)=13,\text{ }N(3,1,2)=0,N(3,2,1)=12.
\]

\end{example}

\medskip
In the next example we employ the method, developed in the proof of Theorem
\ref{forti-deboli}, to build a voting situation q-concordant with a weak
ranking pattern by applying suitable operations over voting situations
q-concordant with strict ranking patterns.
\begin{example}
Let $m=3$ and consider the weak ranking pattern $\boldsymbol{\tilde{\sigma}}$
defined as follows:%
\[
\tilde{\sigma}([3],i)=i,\text{ for }i\in\lbrack3]
\]%
\[
\tilde{\sigma}(\{1,2\},1)=1,\quad\tilde{\sigma}(\{1,2\},2)=1,
\]%
\[
\tilde{\sigma}(\{1,3\},1)=2,\quad\tilde{\sigma}(\{1,3\},3)=1,
\]%
\[
\tilde{\sigma}(\{2,3\},2)=2,\quad\tilde{\sigma}(\{2,3\},3)=1.
\]
Consider furthermore the two strict ranking pattern $\boldsymbol{\sigma
}^{\prime}$ and $\boldsymbol{\sigma}^{\prime\prime}$ defined by
\[
\sigma^{\prime}([3],i)=i,\text{ for }i\in\lbrack3]
\]%
\[
\sigma^{\prime}(\{1,2\},1)=2,\quad\sigma^{\prime}(\{1,2\},2)=1,
\]%
\[
\sigma^{\prime}(\{1,3\},1)=2,\quad\sigma^{\prime}(\{1,3\},3)=1,
\]%
\[
\sigma^{\prime}(\{2,3\},2)=2,\quad\sigma^{\prime}(\{2,3\},3)=1,
\]
and
\[
\sigma^{\prime\prime}([3],i)=i,\text{ for }i\in\lbrack3]
\]%
\[
\sigma^{\prime\prime}(\{1,2\},1)=1,\quad\sigma^{\prime\prime}(\{1,2\},2)=2,
\]%
\[
\sigma^{\prime\prime}(\{1,3\},1)=2,\quad\sigma^{\prime\prime}(\{1,3\},3)=1,
\]%
\[
\sigma^{\prime\prime}(\{2,3\},2)=2,\quad\sigma^{\prime\prime}(\{2,3\},3)=1.
\]
A voting situation $\mathcal{N}'$ q-concordant with $\boldsymbol{\sigma}^{\prime}$ is
\[
N^{\prime}(1,2,3)=0,N^{\prime}(1,3,2)=4,N^{\prime}(2,1,3)=0,N^{\prime
}(2,3,1)=3,N^{\prime}(3,1,2)=0,N^{\prime}(3,2,1)=2.
\]
A voting situation $\mathcal{N}''$ q-concordant  with $\boldsymbol{\sigma}^{\prime\prime}$ is
\[
N^{\prime\prime}(1,2,3)=0,N^{\prime\prime}(1,3,2)=4,N^{\prime\prime
}(2,1,3)=0,N^{\prime\prime}(2,3,1)=3,N^{\prime\prime}(3,1,2)=1,N^{\prime
\prime}(3,2,1)=1.
\]
A voting situation $\tilde{\mathcal{N}}$ q-concordant with $\boldsymbol{\tilde{\sigma}}$
 can be obtained by operating over the afore-mentioned voting situations
$\mathcal{N}'$ and $\mathcal{N}''$, according to the method developed in the
proof of Theorem \ref{forti-deboli}.
More precisely, we apply the formula \eqref{ProducedVotSit} to the special case $A=\{1,2\},j=1,i=2$. Noticing that
\[
n_{1}^{\prime}(\{1,2\})=4,n_{2}^{\prime}(\{1,2\})=5,n_{1}^{\prime\prime
}(\{1,2\})=5,n_{2}^{\prime\prime}(\{1,2\})=4,
\]
from formula \eqref{ProducedVotSit}   one obtains
\[
\tilde{N}(j_{1},j_{2},j_{3})=N^{\prime}(j_{1},j_{2},j_{3})+N^{\prime\prime
}(j_{1},j_{2},j_{3}),\quad(j_{1},j_{2},j_{3})\in\Pi_{m},
\]
whence
\[
\tilde{N}(1,2,3)=0,\tilde{N}(1,3,2)=8,\tilde{N}(2,1,3)=0,\tilde{N}%
(2,3,1)=6,\tilde{N}(3,1,2)=1,\tilde{N}(3,2,1)=3.
\]
\end{example}

\medskip
In the following example we construct a $q$-concordant voting situation in correspondence with any   ranking pattern belonging to a special class. On this purpose we follow a logic similar to the one underlying the first step of the proof of Theorem \ref{th1semplificato} in the Appendix.
\begin{example}
Here we consider the class of the weak ranking patterns $\boldsymbol{\sigma}$
$\ $such that, for any $A\subseteq\lbrack m]$ with $|A|\geq3$,
\[
\sigma(A,i)=1,\quad\forall i\in A
\]
and, for the rest, presenting arbitrary behavior on the subsets $A$ with
$|A|=2$.

In other words, this class of ranking patterns gives rise to an extreme case
where arbitrary outcomes are admitted as far as elections with exactly two
candidates are considered, whereas all the candidates are perfectly equivalent
in all the elections with more than two candidates. For any such
$\boldsymbol{\sigma}$ \ we aim to determine a q-concordant voting situation.
Notice that the solution of this problem might be seen as a generalization of
the classical result by McGarvey \cite{McGarvey1953}. Such a solution can be determined by using a
procedure as the one appearing in the first step of the proof of Theorem
\ref{th1semplificato} and based on the formulae \eqref{UltimeEpsNulle} and
\eqref{agreements}. By following such a procedure one can obtain the voting
situation
\[
\mathcal{N}_{{}}^{(m)}=\{N (j_{1},\ldots,j_{m}
):(j_{1},\ldots,j_{m})\in\Pi_{m}\}
\]
with%

\begin{equation}
N (j_{1},\ldots,j_{m-1},j_{m})=\left\{
\begin{array}
[c]{ll}%
2, & \text{ if }\sigma(\{j_{m-1},j_{m}\},j_{m-1})=1\text{ and }\sigma
(\{j_{m-1},j_{m}\},j_{m})=2;\\
1, & \text{ if }\sigma(\{j_{m-1},j_{m}\},j_{m-1})=\sigma(\{j_{m-1}%
,j_{m}\},j_{m})=1;\\
0, & \text{ if }\sigma(\{j_{m-1},j_{m}\},j_{m-1})=2\text{ and }\sigma
(\{j_{m-1},j_{m}\},j_{m})=1.
\end{array}
\right.  \label{exTh}%
\end{equation}

Notice that the total number of voters here is $n(  \mathcal{N}
^{(m)})  =m!$. In particular, by taking $m=4$ and
\[
\sigma(\{1,2\},1)=1,\quad\sigma(\{1,2\},2)=2,\sigma(\{3,4\},3)=1,\quad
\sigma(\{3,4\},4)=2,
\]%
\[
\sigma(\{1,3\},1)=1,\quad\sigma(\{1,3\},3)=1,\sigma(\{2,3\},2)=1,\quad
\sigma(\{2,3\},3)=1,
\]%
\[
\sigma(\{1,4\},4)=1,\quad\sigma(\{1,4\},1)=2,\sigma(\{2,3\},2)=1,\quad
\sigma(\{2,3\},3)=2,
\]
we obtain a   q-concordant voting situation $ \mathcal{N}
^{(4)} $ such that   $n(  \mathcal{N}
^{(4)})  =24   $ and 
\[
N_{{}}(1,2,3,4)=N_{{}}(2,3,4,1)=N_{{}}(3,2,4,1)=N_{{}}(3,4,1,2)=N(2,1,3,4)=
\]%
\[
N(4,1,2,3)=N(1,4,2,3)=N(4,3,1,2)=2;
\]%
\[
N(1,3,2,4)=N(1,3,4,2)=N(3,1,2,4)=N(3,1,4,2)=
\]%
\[
=N(2,4,1,3)=N(2,4,3,1)=N(4,2,1,3)=N(4,2,3,1)=1;
\]%
\[
N(1,2,4,3)=N(2,3,1,4)=N(3,2,1,4)=N(2,1,4,3)=
\]%
\[
=N(1,4,3,2)=N(4,1,3,2)=N(3,4,2,1)=N(4,3,2,1)=0.
\]

\end{example}

\bibliographystyle{abbrv}

%\textbf{Appendix } 

\newpage
\section{Appendix} \label{Appendix}  Attention will be concentrated on the case when the random
variables $X_{1},\dots,X_{m}$ admit an absolutely continuous joint probability
distribution. Besides the use of the corresponding joint density function,
such a joint distribution can be also described in terms of  the family of the
\textit{Multivariate Conditional Hazard Rate} (m.c.h.r.) functions, defined as follows:%

\[
\lambda_{j}(t|i_{1},\ldots,i_{k};t_{1},\ldots,t_{k}):=
\]%
\begin{equation}
\lim_{\Delta t\rightarrow0^{+}}\frac{1}{\Delta t}\mathbb{P}(X_{j}\leq t+\Delta
t|X_{i_{1}}=t_{1},\ldots,X_{i_{k}}=t_{k},X_{k+1:m}>t). \label{DefMCHR}%
\end{equation}
\begin{equation}
\lambda_{j}(t|\emptyset):=\lim_{\Delta t\rightarrow0^{+}}\frac{1}{\Delta
t}\mathbb{P}(X_{j}\leq t+\Delta t|X_{1:m}>t). \label{DefMCHR2}%
\end{equation}

For definitions and properties of m.c.h.r. functions see in particular
\cite{ShaSha}, the review paper \cite{ShaSha15} and, e.g., \cite{Spi19ASMBI} and
other references cited therein.

As pointed out in \cite{DMS20} and \cite{DS22}, the system of the m.c.h.r. functions
is convenient to analyze some aspects of the quantities $\alpha_{j}(A)$ defined  in
(\ref{alphasNv}).

 We focus on the special cases when  the $m$-tuple $\left(  X_{1},\ldots,X_{m}\right) $ is distributed according
to a \textit{order dependent load-sharing} model, i.e. when, for $k\in[ m-1]$, for distinct
$i_{1},\ldots,i_{k},j\in\lbrack m]$ and for an ordered sequence $0<t_{1}
<\cdots<t_{k}<t,$ one has
\begin{equation}
\lambda_{j}(t|i_{1},\ldots,i_{k};t_{1},\ldots,t_{k})=\mu_{j}(i_{1}
,\ldots,i_{k}),\lambda_{j}(t|\emptyset)=\mu_{j}(\emptyset),\label{DefTHLSMNv}
\end{equation}
for suitable non negative quantities $\mu_{j}(i_{1},\ldots,i_{k})$ and
$\mu_{j}(\emptyset)$.

It is in particular interesting the case when the functions
$\mu_{j}(i_{1},\ldots,i_{k})$ do not depend on the order of the components of
the vector $(i_{1},\ldots,i_{k})$. Such a case has been designated by the term
\emph{non-order dependent} load sharing and, with a minor abuse of notation,
sometime we write $\mu_{j}(I)$, with $I=\{i_{1},\ldots,i_{m}\}$, in place of
$\mu_{j}(i_{1},\ldots,i_{k})$.

For a fixed family $\mathcal{M}$ of parameters $\mu_{j}\left(  \emptyset
\right)  $ and $\mu_{j}(i_{1},\ldots,i_{k})$ as in (\ref{DefTHLSMNv}),\ for
$k\in\left[  m-1\right]  $ and for $i_{1}\neq\ldots\neq i_{k}$, set%

\begin{equation}
M(i_{1},\ldots,i_{k}):=\sum_{j\in\lbrack m]\setminus\{i_{1},\ldots,i_{k}\}}%
\mu_{j}(i_{1},\ldots,i_{k})\text{ and }M(\emptyset)=\sum_{j\in\lbrack m]}%
\mu_{j}(\emptyset). \label{muodeNv}%
\end{equation}

As a relevant property of the corresponding load sharing model, one has
$\mathbb{P}(J_{1}=j)=\frac{\mu_{j}(\emptyset)}{\text{ }M(\emptyset)}$ and
\begin{equation}
\mathbb{P}(J_{k+1}=j|J_{1}=i_{1},\,J_{2}=i_{2},\,\ldots,\,J_{k}=i_{k}%
)=\frac{\mu_{j}(i_{1},\ldots,i_{k})}{M(i_{1},\ldots,i_{k})}.
\label{ProbConditJinLSNv}%
\end{equation}
(see also \cite{Spi19ASMBI} and \cite{DMS20}). Concerning with the joint probability distribution of $\mathbf{J}\equiv\left(
J_{1},...,J_{m}\right) $ one immediately obtains the following consequence, for $h=2,...,m$:
\begin{equation}
\mathbb{P}(J_{1}=i_{1},\,J_{2}=i_{2},\,\ldots,\,J_{h}=i_{h})=\frac{\mu_{i_{1}%
}(\emptyset)}{M(\emptyset)}\frac{\mu_{i_{2}}(i_{1})}{M(i_{1})}\frac{\mu
_{i_{3}}(i_{1},i_{2})}{M(i_{1},i_{2})}\ldots\frac{\mu_{i_{k}}(i_{1}%
,i_{2},\ldots i_{h-1})}{M(i_{1},i_{2},\ldots i_{h-1})}. \label{dispoNv}%
\end{equation}
See also \cite{Spi19ASMBI}. Now we need to introduce the following notation.

For $B \subset[m]$ and $k = 1, \ldots, m -|B|$ let us define
\begin{equation}
\mathcal{D}(B,k):=\{(i_{1},\ldots,i_{k}):i_{1},\ldots,i_{k}\not \in B\text{
and }i_{1}\neq i_{2}\neq\ldots\neq i_{k}\}. \label{insiemeD}%
\end{equation}
When $k=m-|B|$, $\mathcal{D}(B,k)$ is then the set of all the permutations of
the elements of $B^{c}$. In particular the set $\mathcal{D}(\emptyset,m)$
becomes $\Pi_{m}$. %the set of the permutations of all the elements of $[m]$.

Concerning with the probabilities $\alpha_{j}(A)$ in the case of a load-sharing models, the following equation is readily obtained by combining relation (\ref{dispoNv}) with Proposition 1 in \cite{DS22}: 
\[
\alpha_{j}(A)=\mathbb{P}(X_{j}=\min_{i\in A}X_{i})=\frac{\mu_{j}(\emptyset
)}{M(\emptyset)}+
\]%
\begin{equation}
+\sum_{k=1}^{m-\ell}{\sum_{(i_{1},\ldots,i_{k})\in\mathcal{D}( A,k)}}\frac
{\mu_{i_{1}}(\emptyset)}{M(\emptyset)}\frac{\mu_{i_{2}}(i_{1} )}{M(i_{1}%
)}\ldots\frac{\mu_{i_{k}}(i_{1},i_{2},\ldots i_{k-1})}{M(i_{1} ,i_{2},\ldots
i_{k-1})}\frac{\mu_{j}(i_{1},i_{2},\ldots i_{k})}{M(i_{1} ,i_{2},\ldots
i_{k})}. \label{formulata}%
\end{equation}

\bigskip
Let now $\boldsymbol{\sigma}\in\hat{\Sigma}^{(m)}$ be an assigned strict
ranking pattern and let $\varepsilon(2),...,\varepsilon(m)$ be positive
quantities such that $\ (\sigma(A,i)-1)\varepsilon(|A|)<1$ \ for all
$A\subseteq\left[  m\right]  $ and $i\in A$.

Starting from $\boldsymbol{\sigma}$, in \cite{DS22} a special class of
load-sharing models has been defined by imposing parameters of the following
form
\begin{equation}
{\mu}_{i}([m]\setminus A)=1-(\sigma(A,i)-1)\varepsilon(|A|),%
A\subseteq\left[  m\right]  ,\text{ with }|A|\geq2,\text{ }i\in
A.\label{scelta1Nv}%
\end{equation}
For $A=\{i\}$ we finally set $\mu_{i}([m]\setminus A)=1$ and 
$\varepsilon(1)=0$. The load-sharing model corresponding to such a choice of
parameters is designated by the symbol $LS\left(  \boldsymbol{\varepsilon
},\boldsymbol{\sigma}\right)  $.

The interest in such a special class is justified by the following existence result.

\begin{theorem}
\label{th1semplificato} For $m\in\mathbb{N}$, let $\boldsymbol{\sigma}\in
\hat{\Sigma}^{(m)}$ be a ranking pattern. Then there exist constants
$\varepsilon(2),...,\varepsilon(m)$ such that a $m$-tuple $(X_{1},\ldots
,X_{m})$ distributed according to the model $LS\left(  \boldsymbol{\varepsilon
},\boldsymbol{\sigma}\right)  $, where $\boldsymbol{\varepsilon}=\left(  0,
\varepsilon(2),...,\varepsilon(m)\right)  $, is $p$-concordant with
$\boldsymbol{\sigma}$.
\end{theorem}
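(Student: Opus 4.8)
The plan is to put all the parameters on a single geometric scale, setting $\varepsilon(s)=\delta^{\,s-1}$ for $s=2,\dots,m$ (keeping $\varepsilon(1)=0$), where $\delta>0$ is small and is fixed only at the end. This enforces the hierarchy $\varepsilon(2)\gg\varepsilon(3)\gg\cdots\gg\varepsilon(m)$ and, for $\delta$ small, the admissibility condition $(\sigma(A,i)-1)\varepsilon(|A|)<1$. The conceptual heart is a separation of scales: the ordering that $LS(\boldsymbol{\varepsilon},\boldsymbol{\sigma})$ induces among the elements of a set $A$ with $|A|=s$ should be governed, to leading order in $\delta$, by the single parameter $\varepsilon(s)$, which is exactly the one that carries the data $\sigma(A,\cdot)$ through \eqref{scelta1Nv}.

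First I would regroup the sum \eqref{formulata} according to the survivor set $B$ present at the instant the first element of $A$ fails. Since the failures preceding that instant are exactly those of $[m]\setminus B$, after which $j$ fails out of $B$, one obtains
\[
\alpha_j(A)=\sum_{B:\,A\subseteq B\subseteq[m]} Q(B)\,\frac{\mu_j([m]\setminus B)}{M([m]\setminus B)},
\]
where $Q(B)$ is the probability that every element of $[m]\setminus B$ fails before every element of $B$. The key structural facts are that $Q(B)$ does not depend on $j$ and involves only the parameters $\varepsilon(|B|+1),\dots,\varepsilon(m)$ (the survivor sets occurring before $B$ all have size strictly larger than $|B|$), and that, since $\boldsymbol{\sigma}$ is strict, $\sum_{i\in B}(\sigma(B,i)-1)=\binom{|B|}{2}$, so that by \eqref{scelta1Nv}
\[
\frac{\mu_j([m]\setminus B)}{M([m]\setminus B)}=\frac{1-(\sigma(B,j)-1)\varepsilon(|B|)}{|B|-\binom{|B|}{2}\varepsilon(|B|)}.
\]

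Next, for fixed $A$ with $|A|=s$ and $i,j\in A$, subtracting the two expansions and using that the denominators and the factors $Q(B)$ are common to $i$ and $j$ yields
\[
\alpha_i(A)-\alpha_j(A)=Q(A)\,\frac{(\sigma(A,j)-\sigma(A,i))\,\varepsilon(s)}{s-\binom{s}{2}\varepsilon(s)}+\sum_{B\supsetneq A} Q(B)\,\frac{(\sigma(B,j)-\sigma(B,i))\,\varepsilon(|B|)}{|B|-\binom{|B|}{2}\varepsilon(|B|)}.
\]
The single term $B=A$ carries $\varepsilon(s)=\delta^{\,s-1}$, whereas every $B\supsetneq A$ has $|B|\ge s+1$, so the remaining sum is of order $\delta^{\,s}$. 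Since $Q(A)\to 1/\binom{m}{s}>0$ as $\delta\to0$ (the uniform model makes $A$ equally likely to occupy the last $s$ failures), the first term is of exact order $\delta^{\,s-1}$, and its sign equals that of $\sigma(A,j)-\sigma(A,i)$. Hence for $\delta$ small enough the sign of $\alpha_i(A)-\alpha_j(A)$ agrees with that of $\sigma(A,j)-\sigma(A,i)$, which is precisely the $p$-concordance requirement \eqref{glialfa}, the equality case \eqref{glialfauguali} being vacuous for a strict pattern.

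Finally, since there are only finitely many triples $(A,i,j)$, one may choose a single $\delta>0$ small enough to make all these finitely many sign conditions hold simultaneously, proving the theorem. The main obstacle is the bookkeeping behind the displayed decomposition: one must verify carefully that $Q(B)$ is genuinely independent of $j$ and depends only on the strictly larger-size parameters, so that the contribution of $\sigma(A,\cdot)$ is isolated at the scale $\varepsilon(s)$ while the interference from the sets $B\supsetneq A$ is provably of smaller order. Everything else reduces to a uniform smallness choice of $\delta$ over a finite index set.
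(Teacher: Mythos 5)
Your proposal is correct, and it reaches the theorem by a genuinely different route from the paper. The paper proceeds by induction on $n$: it first sets $\varepsilon^{(n)}(n+1)=\cdots=\varepsilon^{(n)}(m)=0$, so that $\alpha_i^{(n)}(A)=1/|A|$ for all $|A|>n$, verifies the required strict inequalities for $|A|\le n$, and then switches on $\varepsilon(n+1)$, invoking continuity of the $\alpha$'s in the intensities to preserve the already-established strict inequalities while a direct computation (formula \eqref{ASidibNUOVA}) breaks the ties at level $n+1$ in the prescribed direction. You instead fix all parameters at once on a geometric scale $\varepsilon(s)=\delta^{s-1}$, regroup \eqref{formulata} over survivor sets $B\supseteq A$, and exploit that $Q(B)$ is independent of the candidate and involves only $\varepsilon(|B|+1),\dots,\varepsilon(m)$, so that $\alpha_i(A)-\alpha_j(A)$ has a leading term of exact order $\delta^{|A|-1}$ with sign $\mathrm{sgn}(\sigma(A,j)-\sigma(A,i))$ and a remainder of order $\delta^{|A|}$; a single uniform choice of $\delta$ over the finitely many triples $(A,i,j)$ finishes the proof. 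Your key structural observations check out: $Q(B)$ factors out by the Markov property \eqref{dispoNv}, $M([m]\setminus B)=|B|-\binom{|B|}{2}\varepsilon(|B|)$ by strictness, and $Q(A)\to 1/\binom{m}{|A|}>0$ as $\delta\to 0$. What each approach buys: the paper's induction is softer (each $\varepsilon(n+1)$ only needs to lie in some unspecified right neighbourhood of zero, which is exactly what Remark \ref{razionali} exploits to choose rational values), whereas your single-parameter geometric ansatz is closer in spirit to the quantitative Theorem \ref{EpsilonEspliciti}, where $\varepsilon(h)=(17\cdot m\cdot m!)^{-h+1}$ is likewise geometric; with a little extra bookkeeping your error estimates could be made fully explicit and would yield concrete admissible constants rather than a pure existence statement.
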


\bigskip

As a matter of fact, the following quantitative result has rather been proven
in \cite{DS22}.

\begin{theorem}
\label{EpsilonEspliciti} For any $\boldsymbol{\sigma}\in\hat{\Sigma}$ and any
$\boldsymbol{\varepsilon}=(0,\varepsilon(2),\ldots,\varepsilon(m))$ such that,
for $\ell=2,...,m-1,$
\begin{equation}
\frac{(m-\ell)!(\ell-1)!}{2\cdot m!}\varepsilon(\ell)>8\ell\varepsilon(\ell+1)
\label{stimaepsilonNv}%
\end{equation}
the model $LS(\boldsymbol{\varepsilon},\boldsymbol{\sigma})$ is $p$-concordant
with $\boldsymbol{\sigma}$.
\end{theorem}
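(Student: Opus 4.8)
The plan is to verify directly the two defining conditions \eqref{glialfa} and \eqref{glialfauguali} of $p$-concordance, exploiting the explicit formula \eqref{formulata} for the winning probabilities $\alpha_j(A)$ of a load-sharing model. Since $\boldsymbol{\sigma}$ is strict, \eqref{glialfauguali} is vacuous and it suffices to fix $A\subseteq[m]$ with $|A|=\ell\geq2$ and two elements $i,j\in A$ with $\sigma(A,i)<\sigma(A,j)$, and to prove $\alpha_i(A)>\alpha_j(A)$; the reverse implication and the strict separation $\alpha_i(A)\neq\alpha_j(A)$ then follow by exchanging the roles of $i$ and $j$. So the whole statement reduces to a single scalar comparison, carried out set by set.

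First I would group the summands of \eqref{formulata} according to the number $k\in\{0,1,\ldots,m-\ell\}$ of elements of $[m]\setminus A$ that fail before $j$. Reading off the parameters $\mu_i([m]\setminus B)=1-(\sigma(B,i)-1)\varepsilon(|B|)$ from \eqref{scelta1Nv}, the crucial bookkeeping observation is that the coefficient $\varepsilon(|B|)$ carried by any factor $\mu_h(\cdots)$ is governed by the cardinality of the still-surviving set $B$; hence the coefficient $\varepsilon(\ell)=\varepsilon(|A|)$ can appear \emph{only} in the single summand $k=m-\ell$, in which all of $[m]\setminus A$ has already failed and the surviving set is exactly $A$. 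In that summand the dependence on the identity of $j$ is confined to the last factor $\mu_j([m]\setminus A)/M([m]\setminus A)=[1-(\sigma(A,j)-1)\varepsilon(\ell)]/M_A$, where (using that $\boldsymbol{\sigma}$ is strict, so $\{\sigma(A,h):h\in A\}=\{1,\ldots,\ell\}$) the denominator $M_A=\ell-\tfrac{\ell(\ell-1)}{2}\varepsilon(\ell)$ does not depend on $j$. Denoting by $S$ the $j$-free probability that the whole of $[m]\setminus A$ fails first, this yields the leading contribution
\[
\frac{S}{M_A}\big(\sigma(A,j)-\sigma(A,i)\big)\varepsilon(\ell)
\]
to $\alpha_i(A)-\alpha_j(A)$, which is strictly positive of order $\tfrac{(m-\ell)!(\ell-1)!}{m!}\,\varepsilon(\ell)$, since $S=\tfrac{(m-\ell)!\,\ell!}{m!}\big(1+O(\varepsilon(\ell+1))\big)$ and $\sigma(A,j)-\sigma(A,i)\geq1$.

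All the remaining $j$-dependence of $\alpha_i(A)-\alpha_j(A)$ comes from the summands with $k<m-\ell$. In each of these the surviving set $B$ at the moment $j$ fails has cardinality $m-k>\ell$, the denominators $M(i_1,\ldots,i_k)$ are symmetric in $i$ and $j$ and cancel, and the only asymmetry is the numerator factor $\mu_i(\cdots)-\mu_j(\cdots)=-\big(\sigma(B,i)-\sigma(B,j)\big)\varepsilon(m-k)$ with $|B|=m-k\geq\ell+1$. Because the condition \eqref{stimaepsilonNv} forces $\varepsilon$ to be strictly decreasing, each such factor is $O(\varepsilon(\ell+1))$; summing the bounded combinatorial prefactors produces an error bounded in absolute value by a quantity of the form $C\,\varepsilon(\ell+1)$, with a careful count giving $C\le 8\ell$. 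The hypothesis \eqref{stimaepsilonNv}, namely $\tfrac{(m-\ell)!(\ell-1)!}{2m!}\,\varepsilon(\ell)>8\ell\,\varepsilon(\ell+1)$, is then exactly what guarantees that the positive leading term dominates the error, so $\alpha_i(A)>\alpha_j(A)$ and $p$-concordance follows.

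The main obstacle is this last step: converting the heuristic ``leading term beats the error'' into rigorous explicit inequalities matching the constants in \eqref{stimaepsilonNv}. This demands uniform control of the denominators $M(i_1,\ldots,i_k)$ — bounding them away from zero and quantifying their deviation from the unperturbed values $m-k$ — in order to pin down the factor $8\ell$ on the error side and the loss factor $\tfrac12$ on the leading side. The hierarchical smallness encoded in \eqref{stimaepsilonNv}, with each $\varepsilon(\ell)$ dwarfing $\varepsilon(\ell+1)$, is precisely what closes these estimates: it lets one resolve the comparisons level by level, the coefficient $\varepsilon(\ell)$ alone settling all orderings on sets of size $\ell$ while the finer coefficients contribute only negligible corrections.
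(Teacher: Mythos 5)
First, a point of orientation: this paper never proves Theorem \ref{EpsilonEspliciti} — it is imported verbatim as Theorem 2 of \cite{DS22}, and the only argument supplied in the Appendix is for the weaker, purely qualitative Theorem \ref{th1semplificato}, which runs along a genuinely different route: an induction on the cardinality level $n$ in which one first sets $\varepsilon(n+1)=\cdots=\varepsilon(m)=0$ (so that $\alpha_i(A)=1/|A|$ for all $|A|>n$) and then switches on a sufficiently small $\varepsilon(n+1)>0$, invoking continuity of the $\alpha$'s in the intensities to preserve the strict inequalities already secured on smaller sets. That argument gives existence but no threshold. Your proposal instead attacks the quantitative statement directly through \eqref{formulata}, and its architecture is sound and surely the one underlying the cited proof: the observation that for $|A|=\ell$ the coefficient $\varepsilon(\ell)$ enters $\alpha_j(A)$ only through the terms with $k=m-\ell$, that the denominators $\hat M_k$ are symmetric in $i,j$ and cancel in the difference $\alpha_i(A)-\alpha_j(A)$, and that the residual $j$-dependence of every other term carries a factor $\varepsilon(m-k)$ with $m-k\geq\ell+1$, is exactly right and exactly mirrors the two sides of \eqref{stimaepsilonNv}.

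The gap is the one you flag yourself, and for this particular statement it is not cosmetic: everything Theorem \ref{EpsilonEspliciti} adds to Theorem \ref{th1semplificato} is the explicit threshold \eqref{stimaepsilonNv}, so a proof must actually produce the constants, whereas your write-up asserts both of them. Concretely, (i) the claimed lower bound, leading term at least $\frac{(m-\ell)!(\ell-1)!}{2m!}\varepsilon(\ell)$, needs $S/\hat M_{m-\ell}\geq\frac{1}{2\ell}\cdot\frac{(m-\ell)!\,\ell!}{m!}$, which requires bounding the product of the $m-\ell$ prefix factors $\mu_{i_{r+1}}(\cdots)/\hat M_r$ from below against their unperturbed values $1/(m-r)$, i.e.\ showing the accumulated multiplicative losses $1-O(\varepsilon(m-r))$ stay above $1/2$; (ii) the claimed upper bound $8\ell\,\varepsilon(\ell+1)$ on the error needs the sum over $k<m-\ell$ and over tuples of terms of size at most $(m-k-1)\varepsilon(m-k)/\hat M_k$, weighted by prefix probabilities, to be collapsed — via the decay forced by \eqref{stimaepsilonNv} itself — into a single multiple of $\varepsilon(\ell+1)$. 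Neither computation is difficult (a rough count lands comfortably inside both constants), but until they are carried out the proposal only re-establishes the qualitative conclusion already delivered by Theorem \ref{th1semplificato}, not the quantitative theorem under review. A complete version should also record the standing admissibility requirement $(\sigma(A,i)-1)\varepsilon(|A|)<1$ from the definition of the $LS(\boldsymbol{\varepsilon},\boldsymbol{\sigma})$ model, which \eqref{stimaepsilonNv} alone does not impose on $\varepsilon(2)$.
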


\bigskip

The inequalities in \eqref{stimaepsilonNv} can be in particular obtained by
letting, for $h=2,...,m$,
\begin{equation}
\varepsilon(h)=(17\cdot m\cdot m!)^{-h+1}.\label{stimaepsilon2}%
\end{equation}
For a given $\boldsymbol{\sigma}\in\hat{\Sigma}^{(m)}$, by Theorem
\ref{EpsilonEspliciti} and \eqref{stimaepsilon2} one can thus conclude with the following
\begin{corollary} \label{MainCrollary}
An $m$-tuple $(X_{1},\ldots,X_{m})$ distributed according to a $LS(\boldsymbol{\varepsilon},\boldsymbol{\sigma})$  load-sharing model 
with parameters of the form
\begin{equation}
\mu_{j}([m]    \setminus A)=1-\frac{\sigma(A,j)-1}{(17\cdot m\cdot m!)^{|A|-1}%
},j\in A.\label{qspiega}%
\end{equation}
is $p$-concordant with $\boldsymbol{\sigma}$.
\end{corollary}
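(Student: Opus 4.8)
The plan is to read the corollary off directly from Theorem \ref{EpsilonEspliciti}: that theorem already guarantees $p$-concordance of $LS(\boldsymbol{\varepsilon},\boldsymbol{\sigma})$ as soon as the vector $\boldsymbol{\varepsilon}$ satisfies the chain of inequalities \eqref{stimaepsilonNv}, so the only genuine content left to verify is that the explicit choice $\varepsilon(h)=(17\cdot m\cdot m!)^{-h+1}$ recorded in \eqref{stimaepsilon2} does satisfy \eqref{stimaepsilonNv}. Once this is checked, substituting $\varepsilon(|A|)=(17\cdot m\cdot m!)^{-|A|+1}$ into the defining relation \eqref{scelta1Nv} for the parameters of $LS(\boldsymbol{\varepsilon},\boldsymbol{\sigma})$ reproduces verbatim the formula \eqref{qspiega}, and the conclusion follows.

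First I would fix the shorthand $z:=17\cdot m\cdot m!$, so that $\varepsilon(h)=z^{1-h}$, and rewrite the inequality \eqref{stimaepsilonNv} indexed by $\ell\in\{2,\ldots,m-1\}$ as
\[
\frac{(m-\ell)!(\ell-1)!}{2\cdot m!}\,z^{1-\ell}>8\ell\,z^{-\ell}.
\]
Multiplying through by $z^{\ell}$ and rearranging reduces it to the single requirement $z>\frac{16\,\ell\cdot m!}{(m-\ell)!(\ell-1)!}$, which after dividing by $m!$ is equivalent to $\frac{16\,\ell}{(m-\ell)!(\ell-1)!}<17\,m$. Since $\ell$ ranges over $\{2,\ldots,m-1\}$ we have $(m-\ell)!\geq1$ and $(\ell-1)!\geq1$, so the left-hand side is at most $16\ell\leq16(m-1)<16m<17m$. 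As this holds uniformly in $\ell$, every inequality in \eqref{stimaepsilonNv} is satisfied by the choice \eqref{stimaepsilon2}.

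It then remains only to confirm that the model is well defined, i.e. that the rates in \eqref{scelta1Nv} are positive. For $A\subseteq[m]$ with $|A|\geq2$ and $i\in A$ one has $(\sigma(A,i)-1)\,\varepsilon(|A|)\leq(m-1)z^{-1}<1$, so each $\mu_i([m]\setminus A)=1-(\sigma(A,i)-1)\varepsilon(|A|)$ lies in $(0,1]$ (for singletons the value $1$ is imposed separately). Having verified its hypothesis, Theorem \ref{EpsilonEspliciti} now yields that $LS(\boldsymbol{\varepsilon},\boldsymbol{\sigma})$ is $p$-concordant with $\boldsymbol{\sigma}$, while the explicit parameter form is exactly \eqref{qspiega}, as required.

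I do not expect any real obstacle: the statement is a routine specialization of Theorem \ref{EpsilonEspliciti}. The only point deserving a little care is the factorial cancellation carrying \eqref{stimaepsilonNv} into the bound on $z$, together with the need for that bound to hold simultaneously for every admissible $\ell$; here the crude estimate $\frac{16\ell}{(m-\ell)!(\ell-1)!}\le 16(m-1)$ dispenses with any closer analysis of the factorials, and the constant $17$ leaves comfortable room for the inequality $16(m-1)<17m$.
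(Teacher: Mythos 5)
Your proposal is correct and follows exactly the paper's route: the corollary is obtained by verifying that the choice $\varepsilon(h)=(17\cdot m\cdot m!)^{-h+1}$ of \eqref{stimaepsilon2} satisfies the chain of inequalities \eqref{stimaepsilonNv} and then invoking Theorem \ref{EpsilonEspliciti}, with \eqref{qspiega} being just \eqref{scelta1Nv} under that choice. The paper merely asserts that \eqref{stimaepsilon2} implies \eqref{stimaepsilonNv}; your reduction to $17m>\tfrac{16\ell}{(m-\ell)!(\ell-1)!}$ and the crude bound $16\ell\le 16(m-1)<17m$ correctly supplies the omitted verification.
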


\medskip
Theorem \ref{EpsilonEspliciti} and Corollary \ref{MainCrollary}  are quantitative results, that have been used in Section \ref{sec4}. We are also interested in Theorem \ref{th1semplificato}, that has been used in Section \ref{section 3}. For this reason an autonomous proof of the latter, independent from the one of Theorem \ref{EpsilonEspliciti}, will be given next.

Notice the following implication of the above formula \eqref{formulata} for a load
sharing model described by the family $\mathcal{M\equiv}\{\mu_{j}%
(I): I\subset\left[  m\right] , j \not \in I  \}$: for given $A\subseteq\left[  m\right]$,
the probabilities $\{\alpha_{j}(A):j\in A\}$ only depend on $\{\mu
_{j}(I):I\subseteq A^{c},j\not \in I\}$.

\bigskip\begin{rem}\label{richiamare}
Specifically concerning with the case of a $LS\left(
\boldsymbol{\varepsilon},\boldsymbol{\sigma}\right) $ model, we can realize
that, for given $B\subseteq\left[  m\right]  $ with $|B|=n\leq m$ and $j\in
B$, the probability $\alpha_{j}(B)$ only depends on $\varepsilon
(n),\varepsilon(n+1)...,\varepsilon(m)$ and on the functions $\sigma(D,\cdot)$
for $D\subseteq\left[  m\right]  $ with $D\supseteq B$.
\end{rem}

Notice furthermore that, for an arbitrary choice of $\boldsymbol{\varepsilon
},\boldsymbol{\sigma}$, the $LS\left(  \boldsymbol{\varepsilon}%
,\boldsymbol{\sigma}\right)  $ model has the set of numbers $\mu
_{j}([m]\setminus A)$ (for $j\in A$) that only depends on the cardinality of $A$. 
%$h=|A|$. 
Thus, by \eqref{muodeNv} and \eqref{scelta1Nv}, one has
\begin{equation}
\hat{M}_{m-h}= M(i_{1}, \ldots, i_{m-h}) = \sum_{u=1}^{h}[1-(u-1)\varepsilon
(h)]=h-\frac{h(h-1)} {2}\varepsilon(h), \label{MdefNv}%
\end{equation}
for $h\in\lbrack m]$ and for any $(i_{1}, \ldots, i_{m-h}) \in\mathcal{D}%
(\emptyset, m-h)$. The formulas \eqref{scelta1Nv}, \eqref{qspiega} and
(\ref{MdefNv}) give rise to a specially convenient form for the probability in \eqref{dispoNv}

\bigskip

We are now in a position to present the proof of Theorem \ref{th1semplificato}.
\begin{proof}
Let us fix the given ranking pattern
\[
\boldsymbol{\sigma}=(\sigma(A,\cdot):A\subseteq\left[  m\right]  ) \in
\hat\Sigma^{(m)}.
\]
In what follows we will inductively identify, for $n=2,\ldots,m$, a sequence
of vectors $\boldsymbol{\varepsilon}^{\left(  n\right)  }\equiv\left(
\varepsilon^{(n)}(\ell))_{\ell=2,\ldots,m}\right)  $ and we will consider the
load-sharing models $LS\left(  \boldsymbol{\varepsilon}^{\left(  n\right)
},\boldsymbol{\sigma}\right)  $ with parameters $\mu_{j}^{( n) }(B)$
determined by (\ref{scelta1Nv}) through $\boldsymbol{\sigma}$ and the vectors
of coefficients\emph{ }$\boldsymbol{\varepsilon}^{\left(  n\right)  }$. In
correspondence with $LS\left(  \boldsymbol{\varepsilon}^{\left(  n\right)
},\boldsymbol{\sigma}\right)  $, denote furthermore by $\alpha_{i}^{(n)}(A)$
($i\in A$)) the related quantities as defined in \eqref{alphasNv}.

Along the construction of $\boldsymbol{\varepsilon}^{\left(  n\right)  }%
\equiv\left(  \varepsilon^{(n)}(\ell))_{\ell=2,\ldots,m}\right)  $,
$n=2,...,m-1$, we will in particular impose the conditions%
\begin{equation}
\varepsilon^{(n)}(n+1)=\cdots=\varepsilon^{(n)}(m)=0 \label{UltimeEpsNulle}%
\end{equation}

and notice that, in view of the formula \eqref{formulata} and Remark
\ref{richiamare}, they imply that
\begin{equation}
\alpha_{i}^{\left(  n\right)  }(A)=\frac{1}{|A|} \label{UltimeAlfsCostanti}%
\end{equation}
for any $A$ $\subseteq\left[  m\right]  $ with $|A|>n$.

For $n=2,\ldots,m$ consider now the sequence of claims $\mathcal{H}(n)$
defined as follows.
\[
\mathcal{H}(n):\text{ By maintaining the condition \eqref{UltimeEpsNulle} it
is possible to find positive quantities }%
\]

$\varepsilon^{(n)}(2),\ldots,\varepsilon^{(n)}(m)\text{ such that }$%

\begin{equation}
\sigma(A,i)<\sigma(A,j)\Rightarrow\alpha_{i}^{(n)}(A)>\alpha_{j}%
^{(n)}(A),\label{agreements}%
\end{equation}
for any $A$ with $|A|\leq n$. Notice that $\mathcal{H}(m)$ coincides with the thesis of the theorem.  
%\[
%\varepsilon(2),\ldots,\varepsilon(m)\text{ such that }%
%\]%
\begin{equation}
\sigma(A,i)<\sigma(A,j)\Rightarrow\alpha_{i}(A)>\alpha_{j}(A),\label{finoam}%
\end{equation}
for any $A\subseteq\lbrack m]$.

The claims $\mathcal{H}(n)$ ($n=2,\ldots,m$) will now be proven by induction
on $n$. This procedure will then lead us to obtain the claim $\mathcal{H}(m)$,
namely the thesis of the Theorem.

In the first induction step $n=2$ we consider the ranking functions
$\sigma(A,\cdot) $ with $A\subseteq[ m] $ and $|A|=2$.

Initially, we set $\varepsilon^{(2)}(2)=\frac{1}{2}$ and $\varepsilon
^{(2)}(\ell)=0$ for any $\ell\geq3$.

In this way, still by formula \eqref{formulata} and equation
\eqref{scelta1Nv}, one has that for $A=\{i,j\}$, with $i \neq j$, the
implication
\[
\sigma(A,i)<\sigma(A,j)\Rightarrow\alpha_{i}^{(2)}(A)>\alpha_{j}^{(2)}(A),
\]
is satisfied.

This claim immediately follows from the circumstance that $\varepsilon
^{(2)}(3)=\cdots=\varepsilon^{(2)}(m)=0$ implies that all the terms, appearing
in the summation in \eqref{formulata} are equal each
other, excepting
\[
\frac{\mu_{i_{1}}^{(2)}(\emptyset)}{M^{(2)}(\emptyset)}\frac{\mu_{i_{2}}%
^{(2)}(i_{1})}{M^{(2)}(i_{1})}\ldots\frac{\mu_{i_{k}}^{(2)}(i_{1},i_{2},\ldots
i_{m-3})}{M^{(2)}(i_{1},i_{2},\ldots i_{m-3})}\frac{\mu_{i}^{(2)}(i_{1}%
,i_{2},\ldots i_{m-2})}{M^{(2)}(i_{1},i_{2},\ldots i_{m-2})}\neq
\]%
\[
\frac{\mu_{i_{1}}^{(2)}(\emptyset)}{M^{(2)}(\emptyset)}\frac{\mu_{i_{2}}%
^{(2)}(i_{1})}{M^{(2)}(i_{1})}\ldots\frac{\mu_{i_{k}}^{(2)}(i_{1},i_{2},\ldots
i_{m-3})}{M^{(2)}(i_{1},i_{2},\ldots i_{m-3})}\frac{\mu_{j}^{(2)}(i_{1}%
,i_{2},\ldots i_{m-2})}{M^{(2)}(i_{1},i_{2},\ldots i_{m-2})}.
\]

We notice that the previous inequalities $\alpha_{i}^{(2)}(A)>\alpha_{j}
^{(2)}(A)$ are strict.

Let us now pass, on the other hand, to considering a set $A\subseteq[m]$ with
$|A|>2$. By \eqref{formulata} and equation \eqref{scelta1Nv} one has
\[
\alpha_{i}^{(2)}(A)=\frac{1}{|A|},
\]
regardless of the index $i\in A$.

We now proceed inductively. For given $n<m$ we maintain \eqref{UltimeEpsNulle}
and assume the induction hypothesis $\mathcal{H}(n)$, guaranteeing the
existence of $\varepsilon^{(n)}(2),\varepsilon^{(n)}(3),\ldots,\varepsilon
^{(n)}(n)$ such that the implication
\begin{equation}
\sigma(A,i)<\sigma(A,j)\Rightarrow\alpha_{i}^{(n)}(A)>\alpha_{j}^{(n)}(A)
\label{finoan}%
\end{equation}
holds true for any $A$ with $|A|\leq n$.  By Remark \ref{richiamare}and due to
\eqref{UltimeEpsNulle}, we again obtain that for any $A\subseteq[m]$ with
$|A|>n$
\begin{equation}
\alpha_{i}^{(n)}(A)=\frac{1}{|A|}, \label{alfauguali2}%
\end{equation}
regardless of the index $i\in A$.

We now want to show that also the claim $\mathcal{H}(n+1)$ holds. On this
purpose, recalling the condition $\varepsilon^{(n+1)}(n+2)=\cdots
=\varepsilon^{(n+1)}(m)=0$, we set
\[
\varepsilon^{(n+1)}(\ell)=\varepsilon^{(n)}(\ell), \text{ for }
\ell=2,\ldots,n.
\]
It remains to conveniently choose the positive value $\varepsilon
^{(n+1)}(n+1)$ in order to get
\begin{equation}
\sigma(A,i)<\sigma(A,j)\Rightarrow\alpha_{i}^{(n+1)}(A)>\alpha_{j}^{(n+1)}(A),
\label{finoan+1}%
\end{equation}
for any $A$ with $|A|\leq n+1$.

The existence of such a value $\varepsilon^{(n+1)}(n+1)>0$ will follow from
the inductive hypothesis $\mathcal{H}(n)$ and from the fact the probabilities
$(\alpha_{i}^{(n+1)}(A))$ are continuous with respect to the collection of the
intensities $(\mu_{i}^{(n+1)}(\cdot))$ and therefore they are also continuous
with respect to the collection of the coefficients $\varepsilon^{(n+1)}(\ell)$
($\ell=2,...,m$). Thus, by continuity, the validity of \eqref{finoan+1} is
maintained for any $A$ with $|A|\leq n$, provided that $\varepsilon
^{(n+1)}(n+1)>0$ is sufficiently small.

Notice that the assumption that all the inequalities in \eqref{finoan} are
strict is unavoidable here.

Let us now consider the sets $A$ with cardinality $|A|=n+1$. Having selected
such a value $\varepsilon^{(n+1)}(n+1)>0$, by \eqref{formulata}, we obtain
\[
\alpha_{i}^{(n+1)}(A)=\frac{1}{m}+\sum_{k=1}^{m-n-2}\frac{m-n-1}{m}%
\frac{m-n-2}{m-1}\ldots\frac{m-n-k}{m-k+1}\frac{1}{m-k}+
\]%
\begin{equation}
+\frac{m-n-1}{m}\frac{m-n-2}{m-1}\ldots\frac{1}{n+2}\frac{1-(\sigma
(A,i)-1)\varepsilon^{(n+1)} (n+1)}{\left[  \ell-n(n+1)\varepsilon^{(n+1)}
(n+1)\right]  }. \label{ASidibNUOVA}%
\end{equation}
This formula shows that if $\sigma(A,i) > \sigma(A,j) $ then $\alpha_{i} (A )
< \alpha_{j} (A)$. Thus \eqref{finoan+1} is satisfied also for $A$ with
$|A|=n+1$ and the thesis is proven by induction.
\end{proof}

\begin{rem}\label{razionali} The quantities $\varepsilon (2), \ldots , \varepsilon (m)$, appearing within the above proof, are only required to be sufficiently small. More precisely, $\varepsilon (n)$ is only required to belong to a suitable right neighbourhood of zero. Such a neighbourhood  will possibly  depend on  $\varepsilon (2), \ldots , \varepsilon (n-1)$. Therefore one can select
$\varepsilon (2), \ldots , \varepsilon (m)\in \mathbb{Q}_+$. With such a choice also the quantities
$\mu_i([m] \setminus A)$, defined in \eqref{scelta1Nv}, turn out to be rational, for any $A \subseteq [m]$ and $i \in A$. Finally, by \eqref{dispoNv}, one obtains that all the elements of $\mathbf{P_J}$ are  rational
as well.
\end{rem}

\end{document}